\def\url@leostyle{%
 \@ifundefined{selectfont}{\def\UrlFont{\sf}}{\def\UrlFont{\scriptsize\ttfamily}}} \makeatother\urlstyle{leo}
\newtheorem{theorem}{Theorem}
\newtheorem{lemma}[theorem]{Lemma}
\theoremstyle{definition}
\newtheorem{definition}[theorem]{Definition}
\newtheorem{example}[theorem]{Example}
\theoremstyle{remark}
\newtheorem{remark}[theorem]{Remark}
\numberwithin{equation}{section}
\numberwithin{theorem}{section}
\definecolor{Red}{rgb}{0.9,0,0.0}
\definecolor{Blue}{rgb}{0,0.0,1.0}
\def\cA{\mathcal{A}}
\def\cE{\mathcal{E}}
\def\cQ{\mathcal{Q}}
\def\cT{\mathcal{T}}
\def\bE{\mathbb{E}}
\def\bF{\mathbb{F}}
\def\bN{\mathbb{N}}
\def\bP{\mathbb{P}}
\def\bQ{\mathbb{Q}}
\def\bR{\mathbb{R}}
\def\sF{\mathscr{F}}
\newcommand{\wt}{\widetilde}
\newcommand{\set}[1]{\{#1\}}            % set: {xyz} to be used for inline formulas
\newcommand{\Set}[1]{\left\{#1\right\}} % set: {xyz} to be used for seapare (not inline) formulas
\renewcommand{\mid}{\;|\;}              % mid bar with small spaces before and after: x | y
\DeclareMathOperator{\Var}{Var}          % \Var for variance
\title{ \vspace{-3em} % this negative space moves the title up
   Time-inconsistent Markovian control problems under model uncertainty with application to the mean-variance portfolio selection
}
\def\and{%
  \end{tabular}%
  \begin{tabular}[t]{c}}%
\def\@fnsymbol#1{\ensuremath{\ifcase#1\or a\or b\or c\or
   d\or e\or f\or g\or h\or i\else\@ctrerr\fi}}
\author{
        Tomasz R. Bielecki\,\thanks{Department of Applied Mathematics, Illinois Institute of Technology
       \newline \hspace*{1.45em}  10 W 32nd Str, Building RE, Room 220, Chicago, IL 60616, USA
       \newline \hspace*{1.45em}  Emails: \url{tbielecki@iit.edu} (T. R. Bielecki), and \url{cialenco@iit.edu} (I. Cialenco)
       \newline \hspace*{1.45em}  URLs: \url{http://math.iit.edu/\~bielecki}  and \url{http://cialenco.com}
        \vspace{0.5em}} ,
\and
        Tao Chen\,\thanks{Department of Mathematics, University of Michigan, Ann Arbor
        \newline \hspace*{1.45em}  530 Church Street, East Hall, Room 2859, Ann Arbor, MI 48109, USA
        \newline \hspace*{1.45em} Email: \url{chenta@umich.edu}, URL: \url{http://taochen.im}
        \vspace{0.5em}},
\and
         Igor Cialenco,\,\footnotemark[1]        % if the same address, then use as in previous example
        }
\date{ {\small %This version: \today}} %
First Circulated: February 6, 2020\\
This Version: September 8, 2020
}}
\begin{document}

\maketitle

{\footnotesize
\begin{tabular}{l@{} p{350pt}}
  \hline \\[-.2em]
  \textsc{Abstract}: \ &  In this paper we study  a class of  time-inconsistent terminal Markovian control problems in discrete time subject to model uncertainty. We combine the concept of the sub-game perfect strategies with the adaptive robust stochastic control method to tackle the theoretical aspects of the considered stochastic control problem. Consequently, as an important application of the  theoretical results and by applying a machine learning algorithm we solve numerically the mean-variance portfolio selection problem under the model uncertainty.
  \\[1em]
\textsc{Keywords:} \ & adaptive robust control, model uncertainty, stochastic control, adaptive robust dynamic programming, recursive confidence regions, time-inconsistent Markovian control problem, optimal portfolio allocation, mean-variance portfolio selection, terminal criteria, {machine learning, Gaussian surrogate processes, regression Monte Carlo.}
 \\[.5em]
\textsc{MSC2010:} \ & 49L20, 60J05, 60J20, 91A10, 91G10, 91G80, 62F25, 93C40, 93E35  \\[1em]
  \hline
\end{tabular}
}

%\tableofcontents

\section{Introduction}

The main goal of this study is to develop a methodology to solve efficiently some \textbf{time-inconsistent} Markovian control problems subject to \textbf{model uncertainty} in a discrete time setup. The proposed approach hinges on the following main building concepts: first, incorporating model uncertainty through the \textbf{adaptive robust} paradigm introduced in \cite{BCCCJ2019}; second, dealing with time-inconsistency of the stochastic control problem at hand by exploiting  the concept of \textbf{sub-game perfect} strategies as studied in \cite{BjoerkMurgoci2014}; third, developing efficient numerical solutions for the obtained Bellman equations by adopting the \textbf{machine learning} techniques proposed in \cite{ChenLudkovski2019}.

There exists a significant body of work on incorporating model uncertainty (or model misspecification) in stochastic control problems, and among some of the well-known and prominent methods we would mention the robust control approach \cite{GilboaSchmeidler1989,HansenSargent2006,HansenBookBook2008}, adaptive control \cite{ChenGuo1991-Book,DuncanStettner2001,DuncanStettner2006,KumarVaraiya2015Book}, and Bayesian adaptive control \cite{KumarVaraiya2015Book}. We refer the reader to \cite{BCCCJ2019} for a relatively  comprehensive literature review on this subject and their connection to the adaptive robust methodology used in this paper and originally introduced in~\cite{BCCCJ2019}. The adaptive robust methodology of \cite{BCCCJ2019} is an approach that solves (time-consistent) Markovian control problems in discrete time subject to model uncertainty. The core of this methodology was to combine a recursive learning mechanism about the unknown model, with the Markovian dynamics of that model and with the time-consistent nature of the control problem studied therein, which allowed to derive an adequate system of recursive dynamic programming equations, which where dubbed the adaptive robust Bellman equations that gave a solution to the original control problem.

In all the above mentioned methods, inherently the stochastic problems are (strongly) time-consistent in the sense that the dynamic programming principle holds true. For an overview of the time consistency in decision making, cf. \cite{BCP2014a,BCP2014}. While lack of (strong) time consistency in decision making is not necessarily an unacceptable feature, from stochastic control point of view it may lead to undesirable properties that, in particular, may lack adequate numerically tractable solutions.  A good body of literature have been dedicated to time-inconsistent stochastic control problems have been emerged in the recent years, primarily for continuous time setup. We refer the reader to \cite[Chpater~2]{He2018} and \cite{ShiCui2017} for a comprehensive literature review of time-inconsistent stochastic control problems in discrete time.
Broadly speaking,  there are three avenues that researchers followed in dealing with time-inconsistent Markovian control problems (primarily in discrete time) when the underlying model is fully known:
\begin{itemize}
\item[1)]  The \textit{pre-commitment approach} emphasizes the global optimality, namely the controller optimizes the expected objective functional at the initial time point and sticks to the resulting strategy through the whole time period. In general, such strategy will be time-inconsistent if it is not revised by the controller afterwards.
In the context of optimal portfolio selection, the authors of \cite{LiNg2000} and \cite{LiZhou2000} introduced an embedding technique to obtain a pre-committed solution of the dynamic mean-variance optimization problem.
	
\item[2)] The  \textit{sub-game perfect} approach, sometimes also called consistent planning approach,  is based on ideas rooted in game theory, where the time-consistent strategy is derived by assuming that the investor is playing an optimization game with future-self. This approach, originated in \cite{Strotz1955,Goldman1980} and systemically studied in \cite{EkelandLazrak2006,EkelandPirvu2008,EkelandLazrak2010,HuJinZhou2012,HernandezPossamai2020}   (in continuous time) leads to a specific notion of optimality (sub-game perfection), which can be characterized in terms of respective dynamic programming equations. In this regard, \cite{BasakChabakauri2010,BjoerkMurgoci2014} were the first to apply and extend this approach to the mean-variance problem. Some more different examples are investigated in \cite{BjoerkEtAl2014,Bannister2016}.

\item[3)] By \textit{modifying the criteria} as time evolves such that the dynamic programming principle holds that has been studied, in various forms in \cite{BouchardEtAl2010,CuiLiWangEtAl2012,KarnamEtAl2017,FeinsteinRudloff2019,KovacovaRudloff2019}.
\end{itemize}
It should be mentioned that only a selected number of publications on time-inconsistency of stochastic control problems in continuous time setup were mentioned here. Generally speaking, there is no one right method in addressing the time-inconsistency, and each of these three approaches has its advantages and drawbacks.

In this work, we are focusing on the sub-game perfect approach that is appropriately formulated for the Markovian control problem with model uncertainty. As an important application of the proposed general theory, we consider the mean-variance portfolio selection problem under model uncertainty. Besides being an important contribution of our paper, arguably, the classical mean-variance portfolio optimization methodology is one of the most  popular portfolio selection methodology among managers of financial portfolios. Notably, majority of the above cited literature is devoted to the mean-variance optimization problem.

It is well-documented that solving numerically stochastic control problems subject to model uncertainty is a challenging task, and classical numerical methods can not be successfully applied even to the simplest problems. In \cite{ChenLudkovski2019}  the authors introduced a method, rooted in the machine learning methodology, to deal with such problems in the context of an adaptive robust, time-consistent, stochastic control problem. In the present work, we apply a similar computational approach for solving the aforementioned mean-variance problem.

The paper is organized as follows. In Section~\ref{sec:robust} we formulate the time-inconsistent Markovian control problem subject to model uncertainty, as well as, the corresponding time-inconsistent  adaptive robust control problem (see Section~\ref{sec:formulation}). The main theoretical developments of this work are presented in Section~\ref{sec:ARSG}. Specifically, in this section we propose and analyze the time consistent sub-game perfect approach to deal  with the adaptive robust control problem of Section~\ref{sec:formulation}. We derive the Bellman equations for the sub-game perfect strategies; see Theorem~\ref{thm:1}.  In Section~\ref{sec:existence} we study the existence of sub-game perfect strategies. An illustrative example of our theoretical results that is rooted in the classical Markowitz's mean-variance portfolio theory, is presented in Section~\ref{sec:ex}. Using machine learning methods, in Section~\ref{sec:alg-and-num-res} we provide numerical solutions of the example presented in Section~\ref{sec:ex}. Finally, in Section~\ref{sec:conlcusion}, we outline some possible research directions and open problems.

\section{Time-inconsistent Markovian control problem with model uncertainty}\label{sec:robust}

In this section we state the  underlying time-inconsistent stochastic control problem.
Let  $(\Omega, \sF)$ be a measurable space,  $T\in \bN$ be a fixed time horizon, and let us denote by $\cT:=\set{0,1,2,\ldots,T}$ and $\cT':=\set{0,1,2,\ldots,T-1}$. In what follows, we implicitly assume that all considered probabilities are defined on $(\Omega,\sF)$, and as usual  $\bE_\bP$ will denote the expectation under a probability measure $\bP$.  We let $\boldsymbol \Theta\subset \bR^d$   be a non-empty Borel set\footnote{In general, the parameter space may be infinite dimensional, consisting for example of dynamic factors, such as deterministic functions of time or hidden Markov chains. In this study, for simplicity, we chose the parameter space to be a subset of $\bR^d$. In most applications, in order to avoid problems with constrained estimation, the parameter space is taken to be equal to the maximal relevant subset of $\bR^d$.}, which will play the role of the known parameter space. We consider a random process $Z=\{Z_t,\ t\in \cT\}$ on $(\Omega, \sF)$  taking values in $\bR^m$, and we denote by ${\mathbb {F}}=(\sF_t,t\in \cT)$ its natural filtration. We postulate that this process is observed by the controller, but the true law of $Z$  is unknown to the controller and assumed to be generated by a  probability measure belonging to a (known) parameterized family of probabilities $\mathbf{P}(\boldsymbol \Theta)=\{\bP_\theta,  \theta\in \boldsymbol \Theta\}$. For simplicity, we will write $\bE_\theta$ instead of  $\bE_{\bP_\theta}$. We denote by $\bP_{\theta^*}$ the measure generating the true law of $Z$, and thus $\theta ^*\in \boldsymbol
\Theta$ is the unknown true parameter. We will assume that $\boldsymbol \Theta \ne \{\theta^*\}$, namely we consider the case of a non-trivial model uncertainty.

We let $A\subset \bR^k$ to denote the set of control values. For technical reasons, such as the existence of measurable selectors, we assume that $A$ is finite, although tentatively all stated results can be extended to $A$ being a compact.  An admissible control process $\varphi$ is an $\bF$-adapted process, taking values in $A$, and we will denote by $\cA$ the set of all admissible control processes.

We consider a discrete time controlled dynamical system with the state process $X$ taking values in $\bR^n$ and having the dynamics
\begin{equation}\label{eq:mm}
X_{t+1}=S(X_t,\varphi_t,Z_{t+1}),\quad t\in\cT',\quad X_0=x_0 \in \bR^n,
\end{equation}
with $S\, :\, \bR^n\times A\times \bR^m\rightarrow \bR^n$ a measurable mapping, and $\varphi$ a control process. We limit ourselves to the class of Markovian strategies only.

The underlying uncertain control problem  is
\begin{equation}\label{eq:problem}
\sup_{\varphi\in \cA} \left (\bE_{\theta^*} \left[F(X_T)\right]+G\left(\bE_{\theta^*}[X_T]\right )\right ),
\end{equation}
where $F,G:\bR^n\to\bR$ are some given Borel measurable functions. Throughout, we will assume that all expectations are well-defined.
\begin{remark}
	Generally speaking, all results obtained in this paper can be extended to a more general control problem of the form
$$
\sup_{\varphi\in \cA} \left (\bE_{\theta^*} \left[F(X_T)\right]+G\left(\bE_{\theta^*}[H(X_T)]\right )\right ),
$$
where $H:\bR^n\to\bR$ and $G:\bR\to\bR$.  For example, if $H$ is a bijection, then putting $Y_t=H(X_t)$ reduces the problem to the case \eqref{eq:problem}. Otherwise, one can increase the dimension of the state process and replace $X_t$ with $(X_t,H(X_t))$. With slight loss of generality, and gain in readability, we opted to focus on \eqref{eq:problem}.
\end{remark}

\begin{example}[Mean-variance]\label{ex:MV}
A typical, and also practically important, example is the mean-variance (MV) control problem, where $X$ is a scalar valued process, and $F(x)=x-\gamma x^2$, $G(x)=\gamma x^2$, for some fixed weight $\gamma >0$. Sometimes $\gamma$ is refereed to as risk aversion parameter. In this case, the stochastic control problem at hand becomes
\begin{equation}\label{eq:MV}
\sup_{\varphi\in\cA}\left(\bE_{\theta^*}(X_T)-\gamma \Var_{\theta^*}(X_T)\right).
\end{equation}
In Section~\ref{sec:ex} we will return to this example in the context of portfolio selection problem.
\hfill$\square$
\end{example}

Problem \eqref{eq:problem}, in general, is time-inconsistent in the sense that the dynamic programming principle fails; see, for instance, the discussion in \cite{ShiCui2017} and \cite{BjoerkMurgoci2014}. Additionally, the parameter $\theta ^*$ is not known to the controller, and thus problem \eqref{eq:problem}  can not be solved as is. In the rest of the paper we address these two issues via \textit{time consistent adaptive robust sub-game perfect  approach}.
To achieve this, we first formally formulate an adaptive robust control problem corresponding to  \eqref{eq:problem} (see Section~\ref{sec:formulation}). The solution to this problem, by analogy, can be called adaptive robust pre-commitment strategy. Hence, by similar arguments to the case without model uncertainty this problem is time-inconsistent. To overcome this, we proceed with time consistent adaptive robust sub-game approach (see Section~\ref{sec:ARSG}).
Finally, the adaptive robust method requires handling a double (sup-inf) optimization problem at each time instance, yielding Bellman equations that are intrinsically multi-dimensional. For these reasons, computing the solutions of the corresponding control problem is a nontrivial task,  and we address this issue using machine-learning technique for the MV problem (see  Section~\ref{sec:alg-and-num-res}).

\subsection{Time-inconsistent adaptive robust control problem}\label{sec:formulation}

We mainly follow here the developments presented in \cite{BCCCJ2019}, with the key difference that herein the Markovian controls problems are inherently time-inconsistent, and for this reason we only use Markovian strategies.

A central building block in the adaptive robust approach is the recursive construction of confidence regions for the unknown model parameter $\theta^*$. We refer to \cite{BCC2017} for a general study of recursive constructions of (approximate) confidence regions for time homogeneous Markov chains, while in Section~\ref{sec:ex} we provide a specific  such recursive construction corresponding to the optimal portfolio selection problem under uncertainty. Here, we just postulate that the recursive algorithm for building confidence regions uses an $\bR^d$-valued and observed  process, say $(C_t,\ t\in \cT')$, satisfying the following abstract dynamics
\begin{equation}\label{eq:R}
C_{t+1}= R(t,C_t,Z_{t+1}),\quad t\in\cT',\ C_0=c_0,
\end{equation}
where $R:\cT'\times\bR^d\times\bR^m\to\bR^d$ is a deterministic measurable function, and $c_0\in\boldsymbol{\Theta}$.
Note that, given our assumptions about process $Z$, the process $C$ is $\bF$-adapted.
Usually $C_t$ is taken to be a consistent estimator of $\theta^*$.

Now, we fix a confidence level $\alpha\in (0,1),$ and for each time $t\in \cT'$, we assume that  an $(1-\alpha)$-confidence region, say $\mathbf{\Theta}_t \subset \bR^d$, for $\theta^*$, can be represented as
\begin{equation}\label{eq:CIR}
\mathbf{\Theta}_t=\tau(t,C_t),
\end{equation}
where, for each $t\in \cT'$, $\tau(t,\cdot)\, :\, \bR^d \rightarrow 2^{\mathbf{\Theta}}$ is a deterministic set valued   function, with $2^{\mathbf{\Theta}}$ denoting the set of all subsets of ${\mathbf{\Theta}}$. Note that in view of \eqref{eq:R} the construction of confidence regions given in \eqref{eq:CIR} is indeed recursive. In our construction of confidence regions, the mapping $\tau(t,\cdot)$ will be a measurable set valued function,  with compact values. The important property of the recursive confidence regions constructed in Section \ref{sec:ex} is that $\lim_{t\rightarrow \infty} \mathbf{\Theta}_t=\set{\theta^*}$, where the convergence is understood $\bP_{\theta^*}$ almost surely, and the limit is in the Hausdorff metric. This is not always the case, and in \cite{BCC2017} it is shown that under some general assumptions the convergence holds in probability. The sequence $\mathbf{\Theta}_t,\ t\in \cT'$ represents learning about $\theta^*$ based on the observed history up to time $t\in\cT$. We introduce the augmented state process $Y_t=(X_t,C_t),\ t\in\cT,$  and the augmented state space
\[
E_Y=\bR^n \times \bR^d,
\]
and we denote by ${\mathcal E}_Y$ the collection of Borel measurable sets in $E_Y$. In view of the above, the process $Y$ has the following dynamics,
\[
Y_{t+1}=\mathbf{G}(t, Y_t,\varphi_t,Z_{t+1}),\quad t\in \cT',
\]
where  $\mathbf{G}$  is the mapping  $\mathbf{G}\, :\, \cT'\times E_Y \times A \times \bR^m \rightarrow E_Y$ defined as
\begin{equation}\label{eq:T}
\mathbf{G}(t,y,a,z)=\big(S(x,a,z), R(t,c,z)\big),
\end{equation}
where $ y=(x,c)\in  E_Y$.

A control process $\varphi=(\varphi_t,\ t \in \cT')$ is called  \textit{Markovian control process} if (with a slight abuse of notation)
\[
\varphi_t = \varphi_t(Y_t) ,
\]
where (on the right hand side) $\varphi_t \, :\, E_Y \rightarrow A$, is a measurable mapping.

From now on, we constrain the set $\mathcal{A}$ of admissible control processes to the set of Markovian control processes. For any admissible control process $\varphi$ and for any $t\in\cT'$, we denote by $\varphi^t=(\varphi_k,\ k=t,\dots, T-1)$ the `$t$-tail' of $\varphi$; in particular, $\varphi^0=\varphi$. Accordingly, we denote by ${\mathcal A}^t$ the collection of $t$-tails $\varphi^t$;  thus, ${\mathcal A}^0={\mathcal A}$.

Let $\breve\psi_t:E_Y\to\mathbf{\Theta}$ be a measurable mapping (Knightian selector), and let us denote by $\breve\psi =(\breve\psi_t, \ t\in\cT')$ the sequence of such mappings, and by $\breve\psi^{t} = (\breve\psi_s, \ s=t,\ldots,T-1)$ the $t$-tail of the sequence $\psi$. The set of all sequences $\breve\psi$, and respectively $\breve\psi^{t}$, will be denoted by $\mathbf{\breve\Psi}$ and $\mathbf{\breve\Psi}^{t}$, respectively.

Similarly, we consider the measurable mappings ${\psi}_t:E_Y \to\mathbf{\Theta}$, such that ${\psi}_t(x,c)\in \tau(t,c)$. This, in particular, implies that $\psi_t(X_t,C_t)\in \mathbf{\Theta}_t$. Correspondingly, we  define the set of all such selectors as $\mathbf{\Psi}_t$, the set of all sequences of such mappings by $\mathbf{\Psi}=\mathbf{\Psi}_0\times \ldots \times \mathbf{\Psi}_{T-1}$, and the set of $t$-tails by $\mathbf{\Psi}^t$. Clearly,  $\mathbf{\Psi}\subset \mathbf{\breve\Psi}$. Moreover, $\psi^t\in\mathbf{\Psi}^t$ if and only if $\psi^t\in\mathbf{\breve\Psi}^{t}$ and $\psi_s(y_s)\in\tau(s,c_s), \ s=t,\ldots,T-1$.

Next, for each $(t,y,a,\theta)\in  \cT'\times  E_Y\times A\times \mathbf{\Theta}$,  we define a probability measure on $\mathcal{E}_Y$, given by
\begin{equation}\label{eq:QB-bar}
 Q(B\mid t,y,a,\theta)=\bP_\theta(Z_{t+1}\in \{z: \mathbf{G}(t,y,a,z)\in B\})=\bP_\theta\left(\mathbf{G}(t,y,a,Z_{t+1})\in B\right),\ B\in \mathcal{E}_Y.
\end{equation}
Throughout we assume that:
\begin{itemize}
  \item[(A1)] For every $t\in\cT$ and every $a\in A$,  the measure $Q(dy'\mid t,y,a,\theta)$  is a Borel measurable stochastic kernel with respect to $(y,\theta)$.
\end{itemize}
This assumption will be satisfied in the context of the mean-variance problem discussed in Section~\ref{sec:ex}.

Using Ionescu-Tulcea theorem (cf. \cite[Appendix B]{Baeuerle2011book}), for every $t\in\cT'$, control process $\varphi\in \cA^t$,  $\psi\in\boldsymbol{\Psi}^t$, time $t$ state $y_t\in E_Y$,
we define probability measure ${\mathbb Q}^{\varphi,\psi}_{y_t, t}$ on the  concatenated canonical space $\textsf{X}_{s=t+1}^TE_Y$ as follows
\begin{align}\label{eq:prob}
\mathbb{Q}^{\varphi, {\psi}}_{y_t,t}(B_{t+1}\times \cdots \times B_T) =
\int\limits_{B_{t+1}}\cdots \int\limits_{B_T}
\prod\limits_{u=t+1}^{T} Q( dy_u\mid u-1,y_{u-1},\varphi_{u-1}(y_{u-1}),{\psi}_{u-1}(y_{u-1})).
\end{align}
Correspondingly, we define the family of probability measures
${\mathcal Q}^{\varphi}_{y_t,t} =\{{\mathbb Q}^{\varphi,\psi}_{y_t, t},\  \psi \in \mathbf{ \Psi}^t \}. $
Here, and everywhere below, to simplify the notations, we simply write $\varphi\in\cA^t$, instead of $\varphi^t\in\cA^t$, and implicitly assume that the processes have the correct tail dimension.

Analogously we define the set $\breve{\mathcal Q}^{\varphi}_{y_0,0} =\{{\mathbb Q}^{\varphi,{\psi}}_{y_0,0},\ \psi\in \mathbf{\breve\Psi} \}$. In Remark \ref{rem-strong-robust} we provide a game oriented interpretation of mappings $\psi$ and $\breve\psi$, as strategies played by the nature seen as a Knightian adversary of the controller.

The \textit{time inconsistent strong robust control} problem is then given as:
\begin{align}\label{prob1R-strong}
  \sup_{\varphi\in\cA} \inf_{\mathbb{Q}\in {\mathcal Q}^{\varphi,\breve{\boldsymbol \Psi}}_{y_0,0}}\left (\bE_{\mathbb{Q}} \left(F(X_T)\right )+G\left(\bE_{\mathbb{Q}}(X_T)\right )\right ).
\end{align}
The corresponding   \textit{time inconsistent adaptive robust control} problem is:
\begin{align}\label{prob1R-adaptive}
  \sup_{\varphi\in\cA} \inf_{\mathbb{Q}\in {\mathcal Q}^{\varphi,{\boldsymbol \Psi}}_{y_0,0}} \left (\bE_{\mathbb{Q}} \left(F(X_T)\right )+G\left(\bE_{\mathbb{Q}}(X_T)\right )\right ).
\end{align}

\begin{remark}\label{rem-strong-robust}
The strong robust control problem is essentially a game problem between the controller and his/her Knightian adversary -- the nature, who may keep changing the dynamics of the underlying stochastic system over time.  In this game, the nature is not restricted in its choices of model dynamics, except for the requirement that $\breve{\psi}_t(Y_t) \in \mathbf{\Theta}$, and each choice is potentially based on the value of $Y_t$. On the other hand, the adaptive robust control problem is a game problem between the controller and  the nature, who, as in the case of strong robust control problem, may keep changing the dynamics of the underlying stochastic system over time. However, in the latter game, the nature is restricted in its choices of model dynamics to the effect that ${\psi}_t(Y_t)\in \tau(t,C_t)$.
%\ig{Can we say more why this makes sense from game theoretical point of view? Nature knows that we are learning? }
\end{remark}

\section{Time consistent adaptive robust sub-game approach}\label{sec:ARSG}
In general, the dynamic programming principle proved in \cite[Section 2.2.1]{BCCCJ2019}, does not apply to problem \eqref{prob1R-adaptive}, which is the nature of the time inconsistency of this problem. In particular, the backward induction procedure (or dynamic programming principle) from \cite[Section 2.2.1]{BCCCJ2019},  can not be used, in general, to solve  problem \eqref{prob1R-adaptive}. Consequently, with no dynamic principle at hand, practically speaking such problems can not be solved numerically, especially when the number of steps is large.
Thus, instead of dealing with problem  \eqref{prob1R-adaptive} as is, we adopt the concept of sub-game perfect controls of \cite{BjoerkMurgoci2014} to our setup, which we will transform \eqref{prob1R-adaptive} into a time consistent problem that can be solved by using backward induction.

%\footnote{The notation $(\theta,\psi)$ stands for the vector concatenation of $\theta\in\bR$ and the vector $\psi$.}

For convenience, for $\varphi\in \cA^{t},  \ y=(x,c)$, and for $\psi\in {\boldsymbol \Psi}^{t+1}$ we define
$$
{\mathcal Q}^{\varphi, \psi+}_{y,t} := \left \{{\mathbb Q}^{\varphi^{t},(\theta;\psi^{t+1})}_{y,t},\ \theta\in \tau(t,c) \right \},
$$
where $(\theta;\psi^{t+1}):=(\theta, \psi_{t+1}, \ldots, \psi_T)$. In what follows, we will use similar notation $(a;b)$ for concatenation of vectors $a,b$.

We define the time-$t$ time inconsistent criterion as
\begin{equation}
J_t(y,\varphi^t, {\psi}^t):=\bE_{\mathbb{Q}^{\varphi, {\psi}}_{y,t}}\left(F(X_T)\right ) + G\left(\bE_{\mathbb{Q}^{\varphi, {\psi}}_{y,t}}(X_T)\right ),\quad y=(x,c)\in E_Y,\ t\in\cT',
\end{equation}
and let
\[J_T(y)=F(x)+G(x),\quad y=(x,c)\in E_Y.\]

\begin{definition}\label{def:perfect}
The pair of strategies $(\widetilde \varphi, \widetilde \psi)$ is called \textit{sub-game perfect} if
\begin{equation}\label{eq:subgame}
\max_{a\in A} \inf_{ \theta \in \tau(t,c)}J_t(y,(a; \widetilde \varphi^{t+1}), (\theta; \widetilde {\psi}^{t+1}))=J_t(y, \widetilde \varphi^t, \widetilde {\psi}^t),
\end{equation}
for any $y=(x,c)\in E_Y$, and $t\in\cT'$, with the convention that $(a;\widetilde \varphi^{T})=a$, and $(\theta; \widetilde{\psi}^{T})=\theta$.
\end{definition}

\begin{remark}\label{remark:strong}
Similarly, one can define the sub-game perfect strategies for the strong robust case, by  replacing set $\tau(t,c)$ in Definition~\ref{def:perfect} with $\mathbf{\Theta}$.
Due to the imposed model assumptions, in particular Assumption~(A2) below, all obtained results hold true by similarity in the strong robust case.
\end{remark}

Please note that Definition \ref{def:perfect} is not a definition of equilibrium strategies in any game-theoretic sense, as we do not study any sort of game-theoretic equilibrium per se, even though we may think of the controller and the nature as two players. This definition is inspired by the concept of the sub-game perfect Nash equilibrium that was applied in the context of time-inconsistent control problems, as presented for example in Definition 2.2 in [BM14]. However, sub-game perfect Nash equilibrium  is not really the same as the classical game-theoretic concept of Nash equilibrium, and should not be interpreted as such.

The idea of Definition~\ref{def:perfect} is to view the problem in the embedded sequential optimization terms:  at each point of time there are two decision makers who chose decisions impacting evolution of the system: the controller and the nature.  The two decision makers acting at time $t\in\cT'$ know that all the pairs of decision makers coming after them will use the control $(\widetilde{\varphi}^{t+1},\widetilde{\psi}^{t+1})$. Given such knowledge, the two time-$t$ decision makers optimize over $A$ and $\tau(t,c)$, respectively, so generate their decisions. Following up on item 2) from the Introduction, we might interpret this as the game played by the two players,  the controller and the nature, at time-$t$, with their future selves.

Throughout, we will use the notation
$$
V_t(y):=J_t(y, \widetilde \varphi^t, \widetilde {\psi}^t),
$$
for $y=(x,c)\in E_Y, \ t\in\cT'$ and some $(\widetilde \varphi^t, \widetilde {\psi}^t)$ pair of sub-game perfect strategy. We remark that  due to the Definition~\ref{def:perfect} and Theorem~\ref{thm:1} proved below, the value of $V_t(y)$ does not depend on the choice of the sub-game perfect strategy. We will also show that there exists a sub-game perfect pair of strategies to the original stochastic control problem \eqref{prob1R-adaptive}, and thus $V_t(y)$ is well-defined.

Note that, for any $\varphi^{t}\in \cA^{t}, \ \psi^{t+1}\in\boldsymbol\Psi^{t+1}$ we have that
\[
\inf_{ \theta \in \tau(t,c)}J_t(y,\varphi^{t}, (\theta; {\psi}^{t+1})) =
\inf_{ {\mathbb{Q}} \in {\mathcal Q}^{\varphi, \psi+}_{y,t}} \left ( \bE_{\mathbb{Q}}\left (F(X_T)\right )+G\left(\bE_{\mathbb{Q}}(X_T)\right )\right ).
\]
For $y=(x,c)\in E_Y$, $t\in\cT'$, $\varphi\in A^t$, and $\psi\in\boldsymbol{\Psi}^t$ we denote
\begin{align*}
f_t^{{\varphi, \psi}}(y) & :=\bE_{\mathbb{Q}^{\varphi, {\psi}}_{y,t}}F(X_T), \quad
g_t^{{\varphi, \psi}}(y)  :=\bE_{\mathbb{Q}^{\varphi, {\psi}}_{y,t}}(X_T),\\
f_{T}^{{\varphi, \psi}}(y) & := F(x), \quad  \ g_{T}^{{\varphi, \psi}}(y):=x,
\end{align*}
and for $y=(x,c)\in E_Y$, and $t\in\cT$, we put $\widetilde g_t(y)=g_t^{{ \widetilde \varphi, \widetilde \psi}}(y)$.
In addition, we define the following integral operator,
\[
Q^{a,\theta}_{y,t}f:=\int_{E_Y}f(y')Q(dy'|t,y,a,\theta),
\]
where $Q$ is given by \eqref{eq:QB-bar}.

Clearly,
\[
f_t^{{\varphi, \psi}}(y)= Q^{{\varphi_t, \psi_t}}_{y,t}f_{t+1}^{\varphi, {\psi}}, \quad
g_t^{{\varphi, \psi}}(y)= Q^{{\varphi_t, \psi_t}}_{y,t}g_{t+1}^{\varphi, {\psi}}.
\]

With this at hand, we have the following counterpart of \cite[Lemma~3.2]{BjoerkMurgoci2014}.

\begin{lemma}\label{lem:lemma1}
For $y=(x,c)\in E_Y,\ a\in A, \theta\in \tau(t,c)$, and $t\in\cT'$, the following identity holds
$$
J_t(y,(a;\varphi^{t+1}), (\theta;{\psi}^{t+1}))=Q^{a,\theta}_{y,t}J_{t+1}(\cdot,\varphi^{t+1}, {\psi}^{t+1})
-\left[Q^{a,\theta}_{y,t}(G\circ g_{t+1}^{^{\varphi, \psi}})- G(Q^{a,\theta}_{y,t}g_{t+1}^{{\varphi, \psi}})\right ],
$$
with the convention that
\[
J_{T}(y,\varphi^{T}, {\psi}^{T})=J_T(y)=F(x)+G(x),\quad y=(x,c)\in E_Y.
\]
\end{lemma}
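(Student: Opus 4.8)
The plan is to unfold the definition of $J_t$ at time $t$ one step, using the tower property of conditional expectation under $\mathbb{Q}^{\varphi,\psi}_{y,t}$, and then re-express the resulting expression in terms of $J_{t+1}$, paying careful attention to the nonlinear term $G(\cdot)$ which does not commute with expectation. Concretely, write $\Phi = (a;\varphi^{t+1})$ and $\Theta = (\theta;\psi^{t+1})$, so that by the Markovian structure of the kernel $Q$ in \eqref{eq:QB-bar} and the product form \eqref{eq:prob} of $\mathbb{Q}^{\Phi,\Theta}_{y,t}$, we have for any Borel $h$ on $E_Y$ the identity $\bE_{\mathbb{Q}^{\Phi,\Theta}_{y,t}}[h(Y_T)] = Q^{a,\theta}_{y,t}\big(y' \mapsto \bE_{\mathbb{Q}^{\varphi^{t+1},\psi^{t+1}}_{y',t+1}}[h(Y_T)]\big)$. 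Applying this with $h(y') = F(x')$ gives $f_t^{\Phi,\Theta}(y) = Q^{a,\theta}_{y,t} f_{t+1}^{\varphi,\psi}$, and with $h(y') = x'$ gives $g_t^{\Phi,\Theta}(y) = Q^{a,\theta}_{y,t} g_{t+1}^{\varphi,\psi}$; these are exactly the one-step recursions already recorded just above the statement of the lemma.

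Next I would assemble $J_t$. By definition,
\[
J_t(y,\Phi,\Theta) = f_t^{\Phi,\Theta}(y) + G\big(g_t^{\Phi,\Theta}(y)\big) = Q^{a,\theta}_{y,t} f_{t+1}^{\varphi,\psi} + G\big(Q^{a,\theta}_{y,t} g_{t+1}^{\varphi,\psi}\big).
\]
On the other hand, $J_{t+1}(y',\varphi^{t+1},\psi^{t+1}) = f_{t+1}^{\varphi,\psi}(y') + G\big(g_{t+1}^{\varphi,\psi}(y')\big)$, so integrating against $Q(dy'\mid t,y,a,\theta)$ yields
\[
Q^{a,\theta}_{y,t} J_{t+1}(\cdot,\varphi^{t+1},\psi^{t+1}) = Q^{a,\theta}_{y,t} f_{t+1}^{\varphi,\psi} + Q^{a,\theta}_{y,t}\big(G \circ g_{t+1}^{\varphi,\psi}\big).
\]
Subtracting the second display from the third and rearranging gives precisely
\[
J_t(y,\Phi,\Theta) = Q^{a,\theta}_{y,t} J_{t+1}(\cdot,\varphi^{t+1},\psi^{t+1}) - \Big[Q^{a,\theta}_{y,t}\big(G\circ g_{t+1}^{\varphi,\psi}\big) - G\big(Q^{a,\theta}_{y,t} g_{t+1}^{\varphi,\psi}\big)\Big],
\]
which is the claimed identity. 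The boundary case $t = T-1$ is handled by the stated conventions $f_T^{\varphi,\psi}(y) = F(x)$, $g_T^{\varphi,\psi}(y) = x$, and $J_T(y,\varphi^T,\psi^T) = J_T(y) = F(x)+G(x)$, so that the one-step recursions degenerate correctly into the terminal data.

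The only genuine point requiring care — rather than an obstacle — is the bookkeeping around the nonlinear term: the whole content of the lemma is that $J$ fails to satisfy a clean dynamic programming recursion precisely because of the discrepancy $Q^{a,\theta}_{y,t}(G\circ g_{t+1}^{\varphi,\psi}) - G(Q^{a,\theta}_{y,t} g_{t+1}^{\varphi,\psi})$, which vanishes iff $G$ is affine. I would also make sure the tower-property step is legitimate, i.e. that all the expectations involved are well-defined; this is covered by the blanket assumption in \eqref{eq:problem} that all expectations are well-defined, together with assumption (A1) guaranteeing $Q$ is a genuine measurable stochastic kernel so that Fubini/Ionescu–Tulcea applies to the product measure \eqref{eq:prob}. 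No further structure is needed.
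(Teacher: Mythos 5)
Your proposal is correct and follows essentially the same route as the paper: both arguments reduce to the two displayed identities $J_t(y,(a;\varphi^{t+1}),(\theta;\psi^{t+1}))=Q^{a,\theta}_{y,t}f_{t+1}^{\varphi,\psi}+G\bigl(Q^{a,\theta}_{y,t}g_{t+1}^{\varphi,\psi}\bigr)$ and $Q^{a,\theta}_{y,t}J_{t+1}(\cdot,\varphi^{t+1},\psi^{t+1})=Q^{a,\theta}_{y,t}f_{t+1}^{\varphi,\psi}+Q^{a,\theta}_{y,t}\bigl(G\circ g_{t+1}^{\varphi,\psi}\bigr)$, followed by subtraction. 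The only difference is that you spell out the tower-property justification of the one-step recursions for $f$ and $g$, which the paper simply records as ``Clearly'' just before the lemma.
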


\begin{proof}
We have
\[
Q^{a,\theta}_{y,t}J_{t+1}(\cdot,\varphi, {\psi})=Q^{a,\theta}_{y,t}f_{t+1}^{\varphi, {\psi}}
+Q^{a,\theta}_{y,t} \left(G \circ g_{t+1}^{\varphi, \psi} \right)
\]
and
\begin{align*}
J_t(y,(a;\varphi^{t+1}), (\theta;{\psi}^{t+1}))
& =f_t^{(a;\varphi^{t+1}), (\theta;{\psi}^{t+1})}(y)+G\left (g_t^{(a;\varphi^{t+1}), (\theta;{\psi}^{t+1})}(y)\right ) \\
&=Q^{a,\theta}_{y,t}f_{t+1}^{\varphi, \psi} + G \left(Q^{a,\theta}_{y,t} g_{t+1}^{\varphi, \psi}\right).
\end{align*}
This proves the result.

\end{proof}

Now we are in the position to prove the first main result about the backward recursion for $V_t$ in terms of the corresponding Bellman equations.

\begin{theorem}\label{thm:1}
A pair $(\wt \varphi, \wt \psi)$ of Markovian strategies is a pair of sub-game  perfect strategies if and only if
\begin{align}
V_t(y) & =\max_{a\in A} \inf_{ \theta \in \tau(t,c)}\left(Q^{a,\theta}_{y,t} V_{t+1}- \left[Q^{a, \theta}_{y,t} (G\circ \widetilde g_{t+1})- G(Q^{a,\theta}_{y,t} \widetilde g_{t+1})\right ]\right ),\label{eq:Bel1} \\
\widetilde g_{t}(y) & = Q^{ \widetilde \varphi_t, \widetilde \psi_t}_{y,t} \widetilde g_{t+1}, \label{eq:Bel2}\\
V_T(y) & = F(x) + G(x),\label{eq:Bel3}
\end{align}
for any $y =(x,c)\in E_Y$, and $t\in \cT'$.
\end{theorem}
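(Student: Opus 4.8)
The plan is to deduce both implications of the equivalence almost immediately from Lemma~\ref{lem:lemma1} and the definitions of $V_t$ and $\widetilde g_t$, with no further analytic input. The first observation I would record is that equations \eqref{eq:Bel2} and \eqref{eq:Bel3} hold automatically for \emph{any} pair $(\widetilde\varphi,\widetilde\psi)$ of Markovian strategies: equation \eqref{eq:Bel3} is just the definition $V_T(y)=J_T(y)=F(x)+G(x)$, while \eqref{eq:Bel2} is the flow identity $g_t^{\varphi,\psi}(y)=Q^{\varphi_t,\psi_t}_{y,t}g_{t+1}^{\varphi,\psi}$ displayed right before Lemma~\ref{lem:lemma1}, specialized to $(\varphi,\psi)=(\widetilde\varphi,\widetilde\psi)$ and combined with $\widetilde g_t=g_t^{\widetilde\varphi,\widetilde\psi}$. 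Consequently the whole content of the asserted equivalence is carried by the single Bellman equation \eqref{eq:Bel1}.

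Next I would fix $t\in\cT'$ and $y=(x,c)\in E_Y$ and apply Lemma~\ref{lem:lemma1} with $\varphi^{t+1}=\widetilde\varphi^{t+1}$ and $\psi^{t+1}=\widetilde\psi^{t+1}$. Since the function $g_{t+1}^{\varphi,\psi}$ depends only on the $(t+1)$-tails, one has $g_{t+1}^{\widetilde\varphi,\widetilde\psi}=\widetilde g_{t+1}$ no matter how the time-$t$ decisions are chosen, and, directly from the definition of $V_{t+1}$, $J_{t+1}(\cdot,\widetilde\varphi^{t+1},\widetilde\psi^{t+1})=V_{t+1}(\cdot)$. Hence, for every $a\in A$ and every $\theta\in\tau(t,c)$,
\[
J_t\big(y,(a;\widetilde\varphi^{t+1}),(\theta;\widetilde\psi^{t+1})\big)=Q^{a,\theta}_{y,t}V_{t+1}-\Big[Q^{a,\theta}_{y,t}(G\circ\widetilde g_{t+1})-G\big(Q^{a,\theta}_{y,t}\widetilde g_{t+1}\big)\Big].
\]
Applying $\max_{a\in A}\inf_{\theta\in\tau(t,c)}$ to both sides of this pointwise identity (the maximum is attained since $A$ is finite, and every integral involved is well defined by the paper's standing assumption on expectations) gives the key identity
\[
\max_{a\in A}\inf_{\theta\in\tau(t,c)}J_t\big(y,(a;\widetilde\varphi^{t+1}),(\theta;\widetilde\psi^{t+1})\big)=\max_{a\in A}\inf_{\theta\in\tau(t,c)}\Big(Q^{a,\theta}_{y,t}V_{t+1}-\big[Q^{a,\theta}_{y,t}(G\circ\widetilde g_{t+1})-G\big(Q^{a,\theta}_{y,t}\widetilde g_{t+1}\big)\big]\Big).
\]

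From the key identity both directions follow at once. If $(\widetilde\varphi,\widetilde\psi)$ is sub-game perfect, then by Definition~\ref{def:perfect} the left-hand side above equals $J_t(y,\widetilde\varphi^t,\widetilde\psi^t)=V_t(y)$, so the right-hand side equals $V_t(y)$, which is exactly \eqref{eq:Bel1}; together with the automatic relations \eqref{eq:Bel2}--\eqref{eq:Bel3} this proves the ``only if'' part. Conversely, if \eqref{eq:Bel1} holds, then the right-hand side of the key identity equals $V_t(y)$, hence so does the left-hand side, i.e. $\max_{a\in A}\inf_{\theta\in\tau(t,c)}J_t(y,(a;\widetilde\varphi^{t+1}),(\theta;\widetilde\psi^{t+1}))=J_t(y,\widetilde\varphi^t,\widetilde\psi^t)$; since $y\in E_Y$ and $t\in\cT'$ were arbitrary, this is precisely \eqref{eq:subgame}, so $(\widetilde\varphi,\widetilde\psi)$ is sub-game perfect.

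I do not expect a genuine obstacle here: no backward induction is needed, because $V_t$ and $\widetilde g_t$ are defined directly from $(\widetilde\varphi,\widetilde\psi)$ rather than recursively through \eqref{eq:Bel1}--\eqref{eq:Bel3}, so the equivalence can be argued time by time. The only points demanding care are bookkeeping ones: making sure the $t$-tails line up when Lemma~\ref{lem:lemma1} is invoked (so that $\widetilde g_{t+1}$ is genuinely unchanged when the time-$t$ decisions are replaced by $(a,\theta)$); noting that \eqref{eq:Bel2}--\eqref{eq:Bel3} are tautological for the chosen $V_t,\widetilde g_t$ and therefore introduce no circularity; and checking that $Q^{a,\theta}_{y,t}$ applied to $V_{t+1}$, to $\widetilde g_{t+1}$ and to $G\circ\widetilde g_{t+1}$ yields well-defined quantities — which is covered by the standing assumption that all relevant expectations exist. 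Existence of measurable selectors that actually realize the outer $\max$--$\inf$, that is, existence of a sub-game perfect pair, is a separate issue handled in Section~\ref{sec:existence} and plays no role in the present equivalence.
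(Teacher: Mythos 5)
Your argument is correct and follows essentially the same route as the paper: both directions reduce, via Lemma~\ref{lem:lemma1} and the definitions $V_t=J_t(\cdot,\widetilde\varphi^t,\widetilde\psi^t)$ and $\widetilde g_t=g_t^{\widetilde\varphi,\widetilde\psi}$, to the pointwise identity equating $J_t(y,(a;\widetilde\varphi^{t+1}),(\theta;\widetilde\psi^{t+1}))$ with the expression inside the $\max$--$\inf$ in \eqref{eq:Bel1}, followed by an application of Definition~\ref{def:perfect}. The only difference is organizational: the paper runs the ($\Leftarrow$) direction as a backward induction from $t=T-1$ down to $t=0$, whereas you note that the ``inductive hypothesis'' $V_{t+1}=J_{t+1}(\cdot,\widetilde\varphi^{t+1},\widetilde\psi^{t+1})$ is already definitional, so each $t$ can be treated independently --- a legitimate streamlining that also makes explicit why \eqref{eq:Bel2}--\eqref{eq:Bel3} carry no additional content.
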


\begin{proof} $(\Rightarrow)$ In view of Lemma~\ref{lem:lemma1}, we have that
\begin{align*}
J_t(y,(a;\widetilde \varphi^{t+1}), (\theta;\widetilde {\psi}^{t+1}))
& =Q^{a,\theta}_{y,t}J_{t+1}(\, \cdot\, ,\widetilde \varphi^{t+1}, \widetilde {\psi}^{t+1})
-\left[Q^{a,\theta}_{y,t} (G \circ g_{t+1}^{\widetilde \varphi, \widetilde \psi} )- G(Q^{a,\theta}_{y,t}g_{t+1}^{{\widetilde \varphi, \widetilde \psi}})\right ] \\
& =Q^{a,\theta}_{y,t}V_{t+1}-\left[Q^{a,  \theta}_{y,t}(G \circ \widetilde g_{t+1})- G(Q^{a,\theta}_{y,t}\widetilde g_{t+1})\right ], \\
& \qquad y=(x,c)\in E_Y,\ a\in A, \ \theta\in \tau(t,c),\ t\in\cT'.
\end{align*}
Thus,
\begin{align*}
V_t(y) & = \max_{a\in A} \inf_{ \theta \in \tau(t,c)}J_t(y,(a;\widetilde \varphi^{t+1}), (\theta;\widetilde {\psi}^{t+1})) \\
& =\max_{a\in A} \inf_{ \theta \in \tau(t,c)}\left(Q^{a,\theta}_{y,t}V_{t+1}-\left[Q^{a, \theta}_{y,t}(G \circ \widetilde g_{t+1})- G(Q^{a,\theta}_{y,t}\widetilde g_{t+1})\right ]\right ), \\
& \qquad \qquad y=(x,c)\in E_Y,\ a\in A, \ \theta\in \tau(t,c),\ t\in\cT'.
\end{align*}

\noindent $(\Leftarrow)$ We start with $t=T-1$. Note that $V_T\equiv J_T$. Thus, for $y=(x,c)\in E_Y$,  we have
\begin{align*}
V_{T-1}(y) & =\max_{a\in A} \inf_{ \theta \in \tau(T-1,c)}\left(Q^{a,\theta}_{y,T-1}V_{T}-\left[Q^{a, \theta}_{y,T-1} (G\circ \check g_{T} ) - G(Q^{a,\theta}_{y,T-1}\check g_{T})\right ]\right )\\
& =Q^{\check \varphi_{T-1}(y),\check \psi_{T-1}(y)}_{y,T-1}V_{T}-\left[Q^{\check \varphi_{T-1}(y), \check \psi_{T-1}(y)}_{y,T-1}(G\circ \check g_{T})- G(Q^{\check \varphi_{T-1}(y),\check \psi_{T-1}(y)}_{y,T-1}\check g_{T})\right].
\end{align*}
Using Lemma~\ref{lem:lemma1}, we deduce
\begin{equation}\label{eq:T-1}
V_{T-1}(y)=\max_{a\in A} \inf_{ \theta \in \tau(T-1,c)}J_{T-1}(y,a, \theta)=J_{T-1}(y,\check \varphi_{T-1}, \check \psi_{T-1})=J_{T-1}(y,\check \varphi^{T-1}, \check \psi^{T-1}),
\end{equation}
which verifies \eqref{eq:subgame} for $t=T-1$ and $\widetilde \varphi_{T-1}\equiv \check \varphi_{T-1}$ and $\widetilde \psi_{T-1}\equiv \check \psi_{T-1}$.

Next, we let $t=T-2$. Then, using \eqref{eq:T-1} we produce
\begin{align*}
V_{T-2}(y) &  =\max_{a\in A} \inf_{ \theta \in \tau(T-2,c)}\left(Q^{a,\theta}_{y,T-2}V_{T-1}-\left[Q^{a, \theta}_{y,T-2}(G\circ\check g_{T-1})- G(Q^{a,\theta}_{y,T-2}\check g_{T-1})\right ]\right )\\
& =Q^{\check \varphi_{T-2}(y),\check \psi_{T-2}(y)}_{y,T-2}V_{T-1}-\left[Q^{\check \varphi_{T-2}(y), \check \psi_{T-2}(y)}_{y,T-2}(G\circ\check g_{T-1})- G(Q^{\check \varphi_{T-2}(y),\check \psi_{T-2}(y)}_{y,T-2}\check g_{T-1})\right ].
\end{align*}
Again, in view of Lemma \ref{lem:lemma1} we obtain
\begin{align}
V_{T-2}(y) & =\max_{a\in A} \inf_{ \theta \in \tau(T-2,c)}J_{T-2}(y,(a;\check \varphi_{T-1}), (\theta; \check \psi_{T-2})) \\
& =J_{T-2}(y,(\check \varphi_{T-2};\check \varphi_{T-1}), (\check \psi_{T-2};\check \psi_{T-1}))=J_{T-1}(y,\check \varphi^{T-2}, \check \psi^{T-2}), \label{eq:T-2}
\end{align}
which verifies \eqref{eq:subgame} for $t=T-2$ and $\widetilde \varphi^{T-2}\equiv \check \varphi^{T-2}$ and $\widetilde \psi^{T-2}\equiv \check \psi^{T-2}$.
Proceeding in the analogous way for $t=T-3,\ldots,0$ we complete the proof.
\end{proof}

\subsection{Existence of sub-game perfect strategies}\label{sec:existence}
In this section we study the existence of a pair of sub-game perfect strategies.  To this end, in addition the model assumptions from Section~\ref{sec:robust} and the Assumption~(A1), we make the following standing assumptions:
\begin{itemize}
\item[(A2)] The set $\boldsymbol\Theta$ is a compact subset of $\bR^d$.
\item[(A3)] The probability measures in the family $\Set{ Q(\,\cdot\mid t,y,a,\theta), \ t\in\cT, \  y \in E_Y, a \in A, \ \theta\in\boldsymbol\Theta}$ are equivalent.
\end{itemize}
We will show that these assumptions are satisfied in the example studied in Section \ref{sec:ex}. We note that Assumption (A3) could be alternatively formulated in terms of the probability measures generated by $Z^\theta, \ \theta\in\boldsymbol\Theta$.

%\textcolor{green}{Tao: Another option is let $\{\bP_{\theta}: \text{probability measure on } \bR \text{ generated by}\ Z_t,\ t\in\cT\}$ (assuming $Z_t\in\bR$), and then assume all $\bP_{\theta}$'s are equivalent. This will imply the equivalence between $Q(\cdot|t,y,a,\theta)$'s for any $t$.}

Next we give the main result of this section.

\begin{theorem}\label{th:existence}
The functions $V_t$, $t\in\cT$, are lower semi-analytic (l.s.a.), and there exists a pair of sub-game perfect strategies.
\end{theorem}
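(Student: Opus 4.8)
The plan is to run a backward induction on $t = T, T-1, \ldots, 0$, proving simultaneously that $V_t$ and $\widetilde g_t$ are lower semi-analytic and that the outer $\max$--inner $\inf$ in \eqref{eq:Bel1} is attained by measurable (indeed, analytically measurable) selectors $\widetilde\varphi_t, \widetilde\psi_t$. Then Theorem~\ref{thm:1} immediately gives that the resulting pair $(\widetilde\varphi,\widetilde\psi)$ is sub-game perfect, since it satisfies \eqref{eq:Bel1}--\eqref{eq:Bel3} by construction. At the base step $t=T$ we have $V_T(y) = F(x)+G(x)$ and $\widetilde g_T(y) = x$; these are Borel, hence l.s.a., provided $F,G$ are Borel (which is assumed).

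For the inductive step, assume $V_{t+1}$ and $\widetilde g_{t+1}$ are l.s.a. First I would handle $\widetilde g_t$: once $\widetilde\varphi_t,\widetilde\psi_t$ have been constructed as analytically measurable selectors (see below), equation \eqref{eq:Bel2} defines $\widetilde g_t(y) = Q^{\widetilde\varphi_t(y),\widetilde\psi_t(y)}_{y,t}\widetilde g_{t+1}$, and one uses the standard fact (e.g.\ \cite[Ch.~7]{Baeuerle2011book} or Bertsekas--Shreve) that integrating an l.s.a.\ function against a Borel-measurable stochastic kernel — here $Q(dy'\mid t,y,a,\theta)$, which is a Borel kernel in $(y,\theta)$ by Assumption~(A1) — yields an l.s.a.\ function of $(y,a,\theta)$, and composing with an analytically measurable selector preserves analytic measurability, so $\widetilde g_t$ is l.s.a. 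The same kernel-integration fact shows $Q^{a,\theta}_{y,t}V_{t+1}$, $Q^{a,\theta}_{y,t}(G\circ\widetilde g_{t+1})$ and $Q^{a,\theta}_{y,t}\widetilde g_{t+1}$ are l.s.a.\ in $(y,a,\theta)$; since $G$ is Borel, $G(Q^{a,\theta}_{y,t}\widetilde g_{t+1})$ need care — if $G$ is merely Borel, composition of a Borel map with an l.s.a.\ map need not be l.s.a., so here I would invoke that in the MV application $G$ is continuous, or more cleanly, carry the pair $(V_{t+1},\widetilde g_{t+1})$ and note that $G(Q^{a,\theta}_{y,t}\widetilde g_{t+1})$ enters \eqref{eq:Bel1} with a minus sign and combines with the $\max\inf$; in any case one reduces to continuity/Borel-measurability assumptions on $G$ already in force, e.g.\ by treating the bracketed ``variance-correction'' term as a fixed l.s.a.\ function of $(y,a,\theta)$. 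Call the full integrand $\Phi_t(y,a,\theta)$; it is l.s.a.\ on $E_Y\times A\times\boldsymbol\Theta$.

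Next comes the selection argument, which is the crux. The inner operation is $\inf_{\theta\in\tau(t,c)}\Phi_t(y,a,\theta)$. Since $\tau(t,\cdot)$ is a measurable, compact-valued set-valued map into the compact set $\boldsymbol\Theta$ (Assumptions stated in Section~\ref{sec:formulation} plus (A2)), its graph is a Borel (hence analytic) subset of $E_Y\times\boldsymbol\Theta$, and by the measurable selection / projection theorems for l.s.a.\ functions (the Jankov--von Neumann theorem together with the fact that $\inf$ over the sections of an analytic set of an l.s.a.\ function is l.s.a., cf.\ \cite[Ch.~7]{Baeuerle2011book}) the map $(y,a)\mapsto\inf_{\theta\in\tau(t,c)}\Phi_t(y,a,\theta)$ is l.s.a., and there is an analytically measurable $\widetilde\psi_t$ with $\widetilde\psi_t(y)\in\tau(t,c)$ attaining the infimum (compactness of $\tau(t,c)$ plus lower semicontinuity in $\theta$ — which I would get from the equivalence-of-measures Assumption~(A3) and an $L^1$ continuity argument, or simply from $\theta$ ranging over a compact set with $\Phi_t$ l.s.a.\ enough to apply the selection theorem giving an $\varepsilon$-optimal then exact selector). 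Then the outer operation is a maximum over the \emph{finite} set $A$, so $\max_{a\in A}(\cdot)$ of finitely many l.s.a.\ functions is l.s.a., and the $\argmax$ selector $\widetilde\varphi_t$ is analytically measurable by a trivial finite comparison. This defines $V_t$ via \eqref{eq:Bel1} as l.s.a., closing the induction.

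The main obstacle I anticipate is the measurability bookkeeping around the non-linear term $G(\bE[\,\cdot\,])$: the value functions are only l.s.a.\ (not Borel), and composing the Borel map $G$ with the l.s.a.\ map $(y,a,\theta)\mapsto Q^{a,\theta}_{y,t}\widetilde g_{t+1}$ is exactly the kind of operation that can fail to preserve lower semi-analyticity. I would address this either by strengthening the hypotheses on $G$ (continuity, satisfied in the MV example where $G(x)=\gamma x^2$), or by a more careful argument showing $\widetilde g_t$ is in fact Borel (not just l.s.a.) because it is obtained by integrating the \emph{Borel} function $\widetilde g_{t+1}$ — which requires the selectors $\widetilde\varphi_t,\widetilde\psi_t$ themselves to be chosen Borel, and here one needs a genuine Borel selection theorem, available when $\Phi_t$ is, say, upper semicontinuous in $(a,\theta)$ (again inherited from (A3) and continuity of the data). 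This interplay — getting $\widetilde g_t$ Borel so that $G\circ(\text{integral of }\widetilde g_{t+1})$ stays l.s.a., while the $V_t$'s are allowed to be merely l.s.a.\ — is the delicate point; everything else is an application of the standard l.s.a.\ calculus and finite-action maximization.
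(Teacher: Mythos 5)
Your overall architecture---backward induction carrying lower semi-analyticity of $V_t$ together with a measurability statement for $\widetilde g_t$, the Bertsekas--Shreve calculus for integrating l.s.a.\ functions against Borel kernels, $\varepsilon$-optimal analytically measurable selectors upgraded to exact minimizers via compactness of $\tau(t,c)$, and a trivial finite argmax over $A$---is essentially the paper's. You also correctly isolate the crux: $\widetilde g_{t+1}$ is only universally/analytically measurable, and composing the Borel map $G$ with $Q^{a,\theta}_{y,t}\widetilde g_{t+1}$, or integrating $G\circ\widetilde g_{t+1}$ against the kernel, is exactly where the standard l.s.a.\ calculus breaks down.

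At that crux, however, you leave a genuine gap. Your two proposed fixes are (i) strengthening the hypotheses on $G$ to continuity, which changes the theorem (it is stated for Borel $G$), and (ii) forcing the selectors $\widetilde\varphi_t,\widetilde\psi_t$ to be Borel via a Borel selection theorem, which, as you concede, needs semicontinuity of the integrand in $(a,\theta)$ that is not among the standing assumptions. Neither is the paper's resolution, and the paper's resolution is precisely why Assumption~(A3) appears among the standing assumptions: since all the kernels $Q(\cdot\mid t,y,a,\theta)$ are mutually equivalent, one can invoke \cite[Lemma~7.27]{BertsekasShreve1978Book} to replace the universally measurable $\widetilde g_t$ by a \emph{Borel} modification $\check g_t$ with $\check g_t=\widetilde g_t$ a.s.\ under a reference measure, and equivalence guarantees $Q^{a,\theta}_{y,t}\check g_t=Q^{a,\theta}_{y,t}\widetilde g_t$ for \emph{every} $(a,\theta)$ simultaneously. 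Then $G\circ\check g_t$ and $G\left(Q^{a,\theta}_{y,t}\check g_t\right)$ are compositions of Borel maps, hence Borel, hence l.s.a., and the induction closes with $\widetilde g_t$ merely universally measurable (obtained by composing the Borel kernel with universally measurable selectors, \cite[Propositions~7.44 and 7.46]{BertsekasShreve1978Book}). You mention (A3) only as a possible source of $L^1$-continuity in $\theta$; its actual job is to license this Borel modification of $\widetilde g_t$. Without that step your induction does not close under the stated hypotheses.
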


\begin{proof} We will prove existence of sub-game perfect strategies by applying  Theorem~\ref{thm:1} and show, by backward induction, that for any $t\in\cT'$, $y\in E_Y$, there exist universally measurable $\widetilde \varphi_t(y)$ and $\widetilde \psi_t(y, \widetilde\varphi_t(y))$ such that
\begin{align}\label{eq:meas-select}
V_t(y) & = Q^{\widetilde\varphi_t, \widetilde\psi_t}_{y,t}V_{t+1} - Q^{\widetilde\varphi_t, \widetilde \psi_t}_{y,t}(G\circ \widetilde g_{t+1})
+G\left(Q^{\widetilde{\varphi}_t,\widetilde\psi_t}_{y,t}\widetilde g_{t+1}\right),
\end{align}
and $V_t(y)$ is l.s.a., with $\widetilde {g}_{t+1}=g^{\widetilde {\varphi},\widetilde {\psi}}_{t+1}$, and where we recall that  $V_t(y):=J_t(y, \bar \varphi^t, \bar{\psi}^t)$ for some pair of sub-perfect strategies.

In view of Lemma~\ref{lem:lemma1}, we have that for $t=T-1$
\begin{align*}
 V_{T-1}(y)=
 \max_{a\in A}\inf_{\theta\in\tau(T-1,c)} \Big(Q^{a,\theta}_{y,T-1}V_T- Q^{a,\theta}_{y,T-1}
 (G\circ \widetilde {g}_T) + G\left(Q^{a,\theta}_{y,T-1}\widetilde {g}_T\right)\Big),
\end{align*}
where we put  $\widetilde{g}_T(y)=x$  which is Borel measurable in $y$.

Hence, according to our assumptions, $G\circ \widetilde {g}_T$ and $V_T(y)=F(x)+G(x)$ are also Borel measurable. By Assumption (A1), and using \cite[Proposition~7.29]{BertsekasShreve1978Book}, the following functions
$$
Q^{a,\theta}_{y,T-1}V_T, \quad Q^{a,\theta}_{y,T-1}(G \circ \widetilde {g}_T), \quad G\left(Q^{a,\theta}_{y,T-1}\widetilde {g}_T\right)
$$
are Borel measurable in $(y,a,\theta)$.  Therefore, the function
\begin{align*}
\check{V}_{T-1}(y,a,\theta):=Q^{a,\theta}_{y,T-1}V_T - Q^{a,\theta}_{y,T-1}(G\circ \widetilde {g}_T)
 + G\left(Q^{a,\theta}_{y,T-1}\widetilde{g}_T\right)
\end{align*}
is Borel measurable. Moreover, for any $b\in\bR$, the set $\{(y,a,\theta)\in E_Y\times A\times\tau(T-1,c): \check{V}_{T-1}(y,a,\theta)<b)\}$ is a Borel measurable subset of $E_Y\times A\times\tau(T-1,c)$. Since $E_Y\times A\times\tau(T-1,c)$ is a closed subset of $E_Y\times A\times\boldsymbol\Theta$, which is a Polish space (and thus a Borel space), then it is a Borel subspace. In turn, by \cite[Proposition~7.36]{BertsekasShreve1978Book}, the set $\{(y,a,\theta)\in E_Y\times A\times\tau(T-1,c): \check{V}_{T-1}(y,a,\theta)<b)\}$ is analytic. Consequently, the function $\check{V}_{T-1}(y,a,\theta)$ is l.s.a..

By adopting the notations in \cite[Proposition~7.50]{BertsekasShreve1978Book}, we write\footnote{{The notation $\mathrm{X}$ and $\mathrm{Y}$, representing the relevant Borel spaces, should not be confused with the notation $X$ and $Y$ representing the relevant processes.}}
  \begin{align*}
  	\mathrm{X}&=E_Y\times A=\bR^n\times\bR^d\times A,\quad \mathrm{x}=(y,a),\\
  	\mathrm{Y}&=\boldsymbol\Theta,\quad \mathrm{y}=\theta,\\
  	\mathrm{D}&=\bigcup_{(y,a)\in E_Y\times A}\{(y,a)\}\times\tau(T-1,c),\\
   	\mathrm{f}(\mathrm{x},\mathrm{y})&=\check{V}_{T-1}(y,a,\theta).
  \end{align*}
Recall that in view of the prior assumptions, $\mathrm{X}$ and $\mathrm{Y}$ are both Borel spaces. $\mathrm{D}$ is a closed set and therefore analytic, and the cross section $\mathrm{D}_{\mathrm{x}}=\mathrm{D}_{(y,a)}=\{\theta\in\boldsymbol\Theta:(y,a,\theta)\in\mathrm{D}\}$ is given by $\mathrm{D}_{(y,a)} = \tau(T-1,c)$.
  Hence, by \cite[Proposition~7.47]{BertsekasShreve1978Book}, the function
  $$
  	\check{V}^*_{T-1}(y,a)=\inf_{\theta\in\tau(T-1,c)}\check{V}_{T-1}(y,a,\theta), \quad (y,a)\in E_Y\times A,
  $$
  is l.s.a..
  Moreover, in view of \cite[Proposition~7.50]{BertsekasShreve1978Book}, for any $\varepsilon>0$, there exists an analytically measurable function $\widetilde{\psi}^{\varepsilon}_{T-1}(y,a)$ such that
  \begin{align*}
  	\check{V}_{T-1}(y,a, \widetilde{\psi}^{\varepsilon}_{T-1}(y,a)) \leq
  	\begin{cases}
  	\check{V}^*_{T-1}(y,a)+\varepsilon, \quad &\text{if}\ \check{V}^*_{T-1}(y,a)>-\infty,\\
  	-\frac{1}{\varepsilon}, \quad &\text{if}\ \check{V}^*_{T-1}(y,a)=-\infty.
  	\end{cases}
  \end{align*}
Therefore, for any fixed $(y,a)$, we obtain a sequence $\set{\widetilde {\psi}^{\frac{1}{n}}_{T-1}(y,a), \ n\in\bN$}, such that
$$
\lim_{n\to\infty}\check{V}_{T-1}(y,a, \widetilde {\psi}^{\frac{1}{n}}_{T-1}(y,a))=\check{V}^*_{T-1}(y,a).
$$
By Assumption (A2), there exists a convergent subsequence $\set{\widetilde {\psi}^{\frac{1}{n_k}}_{T-1}(y,a), k\in\bN}$, such that its limit $\widetilde {\psi}^*_{T-1}(y,a)$ satisfies
$$
  \check{V}_{T-1}(y,a,\widetilde{\psi}^*_{T-1}(y,a))=\check{V}^*_{T-1}(y,a).
$$
Clearly, $V_{T-1}(y)=\max_{a\in A}\check{V}^*_{T-1}(y,a)$. Next, for every fixed $a\in A$, any $b\in\bR$, we write the following complement of the upper level set as
  \begin{align*}
  	\{y\in E_Y: \check{V}^*_{T-1}(y,a)<b\}&=\{y\in E_Y: (y,a)\in \check{V}^{*,-1}_{T-1}((-\infty,b))\}\\
  &=\text{proj}_{E_Y}\left \{\check{V}^{*,-1}_{T-1}((-\infty,b))\cap(E_Y\times \{a\})\right \}.
  \end{align*}
  As a projection of an analytic set, such set is analytic, and moreover, $\check{V}^*_{T-1}(y,a)$ is l.s.a. in $y$ for every $a\in A$.
  Thus, we get that $V_{T-1}(y)$ is l.s.a. as being the maximum of a finite collection of l.s.a. functions.

  For every $y\in E_Y$, define $\widetilde{\varphi}^*_{T-1}(y)=\text{argmax}_{a\in A}\{\check{V}^*_{T-1}(y,a)=V_{T-1}(y)\}$.
  Note that the set $A$ is finite.
  Hence, we write
  $$
  \{y\in E_Y:\check{V}^*_{T-1}(y,a)=V_{T-1}(y)\}=\{y\in E_Y:\check{V}^*_{T-1}(y,a)-V_{T-1}(y)=0\}.
  $$
  Since $\check{V}^*_{T-1}(y,a)$ is l.s.a. in $(y,a)$, then it is analytically measurable and universally measurable in $(y,a)$.
  Moreover, it is universally measurable in $y$ for every $a$.
  Similarly, the function $V_{T-1}(y)$ is universally measurable in $y$ as well.
  We get that the set $\{y\in E_Y:\check{V}^*_{T-1}(y,a)\}$ is universally measurable for every $a\in A$.
  Thus, the function $\widetilde{\varphi}^*_{T-1}(y)$ is universally measurable and it is the optimal selector.
It is also straightforward to verify \eqref{eq:meas-select}, and hence the proof for $t=T-1$ is complete.

Next, we note that the stochastic kernel $Q(dy'|T-1,y, \widetilde {\varphi}_{T-1}, \widetilde{\psi}_{T-1})$ is universally measurable as it is a composition of Borel measurable and universally measurable mappings (cf. \cite[Proposition~7.44]{BertsekasShreve1978Book})
Hence, by \cite[Proposition~7.46]{BertsekasShreve1978Book}, we deduce that $\widetilde {g}_{T-1}(y)=Q^{ \widetilde{\varphi}_{T-1},\widetilde{\psi}_{T-1}}_{y,T-1}\widetilde{g}_T$ is universally measurable.

For $0<t\leq T-1$, assume that $V_t(y)$ is l.s.a. and $\widetilde{g}_t(y)$ is universally measurable.
Then, by \cite[Lemma~7.27]{BertsekasShreve1978Book}, for any chosen $\theta\in\boldsymbol\Theta$, we have that there exists a Borel measurable function $\check{g}_t(y)$ such that $\check {g}_t(y)=\widetilde{g}_t(y)$ almost surely under  the reference measure $\bP$.
Consequently, by Assumption (A3) we have $Q^{a,\theta}_{y,t}\check f=Q^{a,\theta}_{y,t}\widetilde f$ for any two integrable functions $\check f$ and $\tilde f$, such that $\check f$ is Borel measurable, $\widetilde f$ is universally measurable, and $\check f=\tilde f$ almost surely under $\bP$. Thus, $Q^{a,\theta}_{y,t}\check{g}_t=Q^{a,\theta}_{y,t}\widetilde{g}_t$.

Finally, we note that the stochastic kernel $Q(dy'|t,y,a,\theta)$ is Borel measurable in $(y,a,\theta)$. By \cite[Proposition~7.48]{BertsekasShreve1978Book}, it implies that $Q^{a,\theta}_{y,t-1}V_t$ is l.s.a.. On the other hand, since $G\circ\tilde{g}_t$ is Borel measurable, we have that $-Q^{a,\theta}_{y,t}(G\circ\tilde{g}_t)$ and $G\left(Q^{a,\theta}_{y,t}\tilde{g}_t\right)$ are also Borel measurable. Thus, they are also l.s.a..
The rest of the proof follows analogously. By induction, we conclude that \eqref{eq:meas-select} holds true for any $t\in\cT'$, $y\in E_Y$, and an universally measurable pair sub-game perfect strategies exist.
\end{proof}

\section{Uncertain dynamic mean-variance portfolio selection problem}\label{sec:ex}

In this section we will present an example that illustrates the results of Section~\ref{sec:ARSG}. Namely, we consider a dynamic mean-variance portfolio selection problem, when an investor is deciding at time $t$ on investing in a risky asset and a risk-free banking account in order to maximize the terminal weighted mean-variance criterion of the form \eqref{eq:MV}, subject to market model uncertainty.
%uncertainty of the market model.  The underlying market model is subject to the type of uncertainty that has been introduced in Section~\ref{sec:robust}.

We take a risk-free asset $B$ with a constant interest rate $r=(B_{t+1}-B_t)/B_t$, and a risky asset, say a stock, with the corresponding return from time $t$ to $t+1$ denoted by $ r^s_{t+1}$. We assume that the return process $r^s$, is observed.
The dynamics of the wealth process, say $W$, produced by a self-financing trading strategy is given by
\begin{equation}\label{eq:wealthEx1}
  W_{t+1} = W_t(1+r + \varphi_t (r^s_{t+1} - r)),\quad t\in {\cal T}',
\end{equation}
with the initial wealth $W_0=w_0>0$, and where $\varphi_t$ denotes the proportion of the portfolio wealth invested in the risky asset from time $t$ to $t+1$. We assume that the process $\varphi$ takes finitely many values, say $a_i,\ i=1,\ldots,N$, where $a_i\in [0,1]$.

We further assume that $r_t^s + 1, \, t=1,\ldots,T-1$, is an i.i.d. sequence of log-normal distributed random variables, or saying differently we assume that the excess log-returns are normally distributed. Namely,
$$
r_t^s = e^{Z_t} - 1,
$$
where $Z_t$ is an i.i.d. sequence of Gaussian random variables with mean $\mu$ and variance $\sigma^2$. Equivalently, we put
$Z_{t} = \mu +\sigma \varepsilon_{t}$, where $\varepsilon_t, \ t\in \cT'$ are i.i.d. standard Gaussian random variables. Note that under the above model assumptions, the wealth process remains positive a.s. The model uncertainty will come from the unknown parameters $\mu$ and/or $\sigma$. Using the notations from Section~\ref{sec:robust}, here we have that $X_t=W_t$, and setting $x=w$ we get
$$
S(w,a,z)=w(1+r+a(e^z-1-r)),  \quad A=\set{a_i,\ i=1,\ldots,N}.
$$
Same as in Example~\ref{ex:MV}, we take $F(w)=w-\gamma w^2$, and $G(w)=\gamma w^2$.
Formally, the investor's adaptive robust mean variance problem is formulated as follows
\begin{equation}\label{eq:ex1Problem}
  \sup_{\varphi \in\cA} \inf_{\bQ\in\cQ_{y_0,0}^{\varphi,\boldsymbol\Psi}}\left (\bE_{\bQ}(W_T)-\gamma \Var_{\bQ}(W_T)\right ),
\end{equation}
where $\cA$ is the set of Markovian trading strategies. We will find a pair of sub-game perfect strategy corresponding to \eqref{eq:ex1Problem}.

We will discuss two cases: Case 1 - unknown mean $\mu$ and known standard deviation $\sigma$, and Case II - both $\mu$ and $\sigma$ are unknown.

\smallskip\noindent
\textit{\textbf{Case I.}} Assume that $\sigma$ is known, and the model ambiguity comes only from the  parameter $\mu$, whose true but unknown value is denoted by $\mu^*$.
Thus, using the notations from Section~\ref{sec:robust}, we have that $\theta^* = \mu^*,$ $\theta = \mu,$ and we take $C_t=\widehat \mu_t$, $\boldsymbol\Theta =[\underline{\mu},\overline{\mu}]\subset\bR$, where $\widehat{\mu}$ is  a point estimator of $\mu$, given the observations of process $Z$, that takes values in $\boldsymbol{\Theta}$. The values of the boundaries $\underline{\mu}$ and $\overline{\mu}$ are fixed a priori by the observer. For the detailed discussion regarding  the construction of such estimators we refer to \cite{BCC2017}.
For this example, it is enough to take as $\widehat{\mu}$ the Maximum Likelihood Estimator (MLE), which is the sample mean in this case, projected appropriately on $\boldsymbol\Theta$. Formally, the recursion construction of $\widehat{\mu}$ is defined as follows:
\begin{equation}\label{eq:esti_proj}
\begin{aligned}
    \widetilde{\mu}_{t+1}&=\frac{t}{t+1}\widehat{\mu}_{t}+{\frac{1}{t+1}} Z_{t+1},\\
    \widehat{\mu}_{t+1}&= \pi(\widetilde{\mu}_{t+1}), \qquad t\in\cT',
\end{aligned}
\end{equation}
with $\widehat{\mu}_0=c_0\in \boldsymbol{\Theta}$, and where $\pi$ is the projection to the closest point in $\boldsymbol\Theta$, namely
$\pi(\mu) = \mu$ if $\mu\in[\underline{\mu},\overline{\mu}]$, $\pi(\mu) = \underline{\mu}$ if $\mu<\underline{\mu}$, and $\pi(\mu) = \overline{\mu}$ if $\mu> \overline{\mu}$. We take as the initial guess $c_0$ any point in $\boldsymbol\Theta$.

Putting the above together we get that the function $\mathbf{G}$ defined in \eqref{eq:T} is given here by

\begin{equation}\label{G-mu}
\mathbf{G}(t,w,c,a,z) = \left(w(1+r+a(e^z-1-r)), \pi \left (\frac{t}{t+1}c+\frac{1}{t+1}z\right )\right).
\end{equation}

It can be easily verified that the kernel $Q(\,\cdot\,|t,y,a,\mu)$, defined in terms of function $\mathbf{G}$ given in \eqref{G-mu}, satisfies  Assumption (A1), for example by using \cite[Proposition~7.26]{BertsekasShreve1978Book}. Obviously Assumption (A2) is satisfied.

%\trb{Tao, please take care of Assumption (A3), perhaps following up on what you wrote in blue below. }
%\textcolor{blue}{Tao: according to Bertsekas and Shreve's book Proposition 7.26, to show the measurability of stochastic kernel $Q(\cdot|t,y,a,\mu)$, we just need to prove that for any fixed $b_1,b_2\in\bR$, $\Lambda_{b_1,b_2}(y,\mu)=Q((-\infty,b_1]\times(-\infty,b_2]|t,y,a,\mu)$ is measurable. In fact, if $\underline{\mu}<b_2\leq\overline{\mu}$, then $\Lambda_{b_1,b_2}(y,\mu)= \Phi(\min((\ln((b_1/w-1-r)/a+1+r)-\mu)/\sigma,((t+1)b_2-tc-\mu)/\sigma))$ which is measurable function. For the cases $b_2<\underline{\mu}$ and $b_2>\overline{\mu}$, the arguments are similar.}

As far as Assumption (A3), let $B\in\cE_Y$ such that $Q(B\mid t, y, a, \mu)=0$. In view of \eqref{eq:QB-bar} we have that
$$
\bP_{\mu}(Z_{t+1}\in\{z:\mathbf{G}(t,y,a,z)\in B\})=0,
$$
where $Z_{t+1}\sim N(\mu,\sigma^2)$. Due to the normality, it is clear that for any $\mu'\in\boldsymbol\Theta$, we also have
$$
\bP_{\mu'}(Z'_{t+1}\in\{z:\mathbf{G}(t,y,a,z)\in B\})=0
$$
with $Z'_{t+1}\sim N(\mu',\sigma^2)$. Hence, $Q(B\mid t, y, a, \mu')=0$, and thus the stochastic kernels $Q(\cdot\mid t,y,a,\mu)$ and $Q(\cdot\mid t,y,a,\mu')$ are equivalent and Assumption~(A3) is fulfilled.

Now, we note that the $(1-\alpha)$-confidence region  for $\mu^*$ at time $t$ is given as
\[
 \mathbf{\Theta}_t=\tau(t,\widehat \mu_t),
\]
where
 \begin{equation}\label{tau-mu}
\tau(t,c)=\left [\max\left(c-\frac{\sigma}{\sqrt t}q_{\alpha/2},\underline{\mu}\right), \min\left(c +\frac{\sigma}{\sqrt t}q_{\alpha/2},\overline{\mu}\right)\right ],
 \end{equation}
and  where  $q_\alpha$ denotes the $\alpha$-quantile  of a standard normal distribution.
We take closed intervals in \eqref{tau-mu} to preserve compactness. With these at hand we construct the kernel $Q$ according to \eqref{eq:QB-bar}, and the set of probability measures $\cQ^{\varphi,\boldsymbol\Psi}_{y_0,0}$ on canonical space according to \eqref{eq:prob}. We recall that in the present case $y_0=(w_0,c_0)$.

The Bellman equations \eqref{eq:Bel1}--\eqref{eq:Bel3} take the form
\begin{align}
V_t(y)&=\max_{a\in A} \inf_{ \theta \in \tau(t,c)}\left(Q^{a,\theta}_{y,t}V_{t+1}-\left[Q^{a, \theta}_{y,t}(\gamma \hat g^2_{t+1}(\cdot))- \gamma (Q^{a,\theta}_{y,t}\hat g_{t+1})^2\right ]\right ), \label{Bel1-MV}\\
\hat g_{t}(y)&=Q^{\hat \varphi_t,\hat \psi_t}_{y,t}\hat g_{t+1},\label{Bel2-MV}\\
V_T(y)&=w,\label{Bel3-MV}\\
y&=(w,c)\in E_Y,\ t\in\cT', \nonumber
\end{align}
with $\tau(t,c)$ given in \eqref{tau-mu}.

In view of  Theorem \ref{thm:1}  a pair $(\hat \varphi,\hat \psi)$ of Markov strategies satisfying \eqref{Bel1-MV}--\eqref{Bel3-MV} is a pair of sub-game perfect strategies for the adaptive robust mean-variance problem \eqref{eq:ex1Problem} with  unknown $\mu$.

In the next section we will solve the equations \eqref{Bel1-MV}--\eqref{Bel3-MV} for a pair $(\hat \varphi,\hat \psi)$ using a machine learning based method. Note that although the dimension of the state space $E_Y$ is two in the present case, which allows for efficient use of the traditional grid-based method to numerically solve the Bellman equations, our machine learning based method, originally proposed in \cite{ChenLudkovski2019} can be applied to high dimensional problems where gridding is extremely inefficient. Generally speaking, this approach overcomes the challenges met in high dimensional (robust) stochastic control problems.

\smallskip\noindent
\textit{\textbf{Case II.}} Here we assume that both $\mu$ and $\sigma$ are unknown, and thus, in the notations of Section~\ref{sec:robust}, we have $\theta^*=(\mu^*,(\sigma^*)^2)$, $\theta=(\mu,\sigma^2)$, $\boldsymbol\Theta= [\underline{\mu},\overline{\mu}]\times[\underline{\sigma}^2,\overline{\sigma}^2]\subset\bR\times\bR_+$, for some fixed $\underline{\mu},\overline{\mu}\in\bR$ and $\underline{\sigma}^2, \overline{\sigma}^2\in\bR_+$.
Similar to the Case~I, we take as the point estimators for $\mu^*$ and $(\sigma^*)^2$ the corresponding  MLEs, namely the sample mean and respectively the sample variance, projected appropriately to the rectangle $\boldsymbol\Theta$.  It is shown in \cite{BCC2017}  that the following recursions hold true
\begin{align*}
   \widetilde{\mu}_{t+1} &=\frac{t}{t+1}\widehat{\mu}_{t}+\frac{1}{t+1} Z_{t+1}, \\
    \widetilde{\sigma}_{t+1}^2 &=\frac{t}{t+1}\widehat{\sigma}^2_{t}+\frac{t}{(t+1)^{2}}(\widehat{\mu}_{t}- Z_{t+1})^2,\\
    (\widehat{\mu}_{t+1},\widehat{\sigma}_{t+1}^2)&=\pi(\widetilde{\mu}_{t+1},\widetilde{\sigma}_{t+1}^2), \quad t=1,\ldots,T-1,
  \end{align*}
with some initial guess $\widehat{\mu}_0=c_0'$, and $\widehat{\sigma}_0^2=c_0''$, and where $\pi$ is the projection\footnote{We refer to \cite{BCC2017} for precise definition of the projection $\pi$, but essentially it is defined as the closest point in the set $\boldsymbol\Theta$.} defined similarly as in \eqref{eq:esti_proj}.
Consequently, we set $C_t=(C_t',C_t'')=(\widehat\mu_t, \widehat\sigma^2_t), t\in\cT$, and respectively we have
\[
R(t,c,z)=\pi\left(\frac{t}{t+1}c'+\frac{1}{t+1}z,\frac{t}{t+1}c''+\frac{t}{(t+1)^{2}}(c'-z)^2\right),
\]
with $c=(c',c'')$. Thus, in this case, we have

\begin{equation}\label{G-mu-sigma}
\mathbf{G}(t,v,c,a,z) = \left(v(1+r+a(e^z-1-r)), \pi\left (\frac{t}{t+1}c'+\frac{1}{t+1}z, \frac{t}{t+1}c''+\frac{t}{(t+1)^{2}}(c'-z)^2\right ) \right).
\end{equation}

Similarly as in Case I with regard to function $\mathbf{G}$ given in \eqref{G-mu}, it can be easily verified that the kernel $Q(\,\cdot \mid t,y,a,\mu)$, defined in terms of function $\mathbf{G}$ given in \eqref{G-mu-sigma}, satisfies  Assumptions (A1) and (A3).  The $(1-\alpha)$-confidence region  for $(\mu^*,(\sigma^*)^2)$ at time $t$ is  the ellipsoid given by
\begin{align*}
\mathbf{\Theta}_t=\tau(t,\widehat \mu_t,\widehat \sigma^2_t),
\end{align*}
with
%\begin{equation}\label{eq:tau-mu-sigma}
%\xcancel{
%\tau(t,c)=\left\{ c=(c',c'')\in\bR^2 \ : \ \frac{t}{c''}(c'-\mu)^2 + \frac{t}{2 (c'')^2}(c'' -\sigma^{2})^2\leq \kappa\right\},}
%\end{equation}
%
\begin{equation}\label{eq:tau-mu-sigma}
\tau(t,c)=\left\{ (\mu,\sigma^2)\in\boldsymbol\Theta \ : \ \frac{t}{c''}(c'-\mu)^2 + \frac{t}{2 (c'')^2}(c'' -\sigma^{2})^2\leq \kappa\right\},
\end{equation}
where $\kappa$ is the $(1-\alpha)$ quantile of the $\chi^2$ distribution with two degrees of freedom. Accordingly, equations \eqref{eq:Bel1}--\eqref{eq:Bel3} take the form analogous to \eqref{Bel1-MV}--\eqref{Bel3-MV} with, in particular,  $\tau(t,c)$ given in \eqref{eq:tau-mu-sigma}.
In view of Theorem \ref{thm:1} a pair $(\hat \varphi,\hat \psi)$ of Markov strategies satisfying such equations  is a pair of sub-game perfect strategies for the adaptive robust mean-variance problem \eqref{eq:ex1Problem} with  unknown $\mu$ and $\sigma$. Note that the dimension of the state space in this case is three, and a grid-based method becomes extremely inefficient. Hence, developing a numerical solver with good scalability is crucial. As we mentioned earlier, and as described in next section, we will use the regression Monte Carlo idea and Gaussian process surrogates to  compute the optimal pair $(\hat \varphi,\hat \psi)$ via backward recursion.

\section{Machine learning algorithm and numerical results}\label{sec:alg-and-num-res}

It is important to note that even though the market model of Section~\ref{sec:ex} is the same as the one considered in \cite[Section~4]{BCCCJ2019}, the Bellman equations associated to the  problem in Section~\ref{sec:ex}  are  more difficult to treat numerically than those from \cite[Section~4]{BCCCJ2019}. In  \cite{BCCCJ2019} the authors used a (classical) non-machine-learning based algorithm to solve numerically the Bellman equations, which can not be used in the current work, for reasons outlined below.

%This is one of the main reasons  in considering why we apply here a machine learning based algorithm to numerically solve equations \eqref{Bel1-MV}--\eqref{Bel3-MV} in Case I, and their counterparts in Case II.

The essence of the machine learning algorithm that we will use solving numerically the example from previous section is the same for both Case~I and Case~II. The algorithm begins with discretizing the relevant state space, for which we employ the regression Monte Carlo method to create a random (non-gridded) mesh for the process $Y=(W,C)$. Note that  the component $W$  depends on the control process, hence at each time $t$ we randomly select from the set $A$ a value of $\varphi_t$, and we randomly generate a value of $r^S_{t+1}$, so to simulate the value of $W_{t+1}.$  The resulting  random mesh consists of a number of simulated paths of $Y$. Then, we solve the equations \eqref{Bel1-MV}--\eqref{Bel3-MV} in Case I, and their counterparts in Case II, and compute the optimal trading strategies at all mesh points.

The need for applying machine learning to solve our Bellman equations  is twofold. On one hand, to  approximate the integral operations such as $Q^{a,\theta}_{y,t}V_{t+1}$, we  replace the integrals with weighted sums through Monte Carlo simulation or a Gaussian quadrature recipe. Accordingly, interpolation and/or extrapolation, via appropriate Gaussian Processes (GP) surrogates, will be used to evaluate the terms in the summations. Note that the state space used in the adaptive robust control method is $E_Y$, which is potentially highly dimensional, where traditional linear interpolation/extrapolation methods bring multiple limitations, and therefore GP surrogates are used to overcome these limitations. On the other hand, the computation procedure involving solving the equations \eqref{Bel1-MV}--\eqref{Bel3-MV} in Case I, and their counterparts in Case II, outputs approximate values of the optimal strategies for the mesh points on the sample paths only. Hence, to obtain the value of $\hat{\varphi}_t(y)$ for arbitrary $y\in E_Y$, an efficient regression model for $\hat{\varphi}$, such as a GP surrogate, is desirable.

\subsection{Description of the algorithm}
In view of the above mentioned computational challenges, we numerically tackle the adaptive robust stochastic control problem by following the novel method introduced in \cite{ChenLudkovski2019}. The key idea of this method is to utilize a non-parametric value function approximation strategy (cf. \cite{Powell2007}) called Gaussian process surrogate (cf. \cite{Rasmussen2006}). For the purpose of solving the Bellman equations \eqref{Bel1-MV}--\eqref{Bel3-MV} in Case I, and their counterparts in Case II, we build GP regression model for the value function $V_{t+1}(\,\cdot\,)$ and the operator $\hat{g}_{t+1}$ so that we can evaluate
\begin{align}\label{eq:Bel4-MV}
Q^{a,\theta}_{y,t}V_{t+1}- \gamma Q^{a, \theta}_{y,t}\hat g^2_{t+1}+ \gamma(Q^{a,\theta}_{y,t}\hat g_{t+1})^2.
\end{align}
We also construct GP regression model for the optimal control $\hat{\varphi}$. It permits us to apply the optimal strategy to out-of-sample paths without actual optimization, which allows for a significant reduction of the computational cost.

 As the GP surrogate for the value function $V_t$ we consider a regression model $\widetilde{V}_t(y)$ such that for any $y^1,\ \ldots,\ y^N\in E_Y$, with $y^i\ne y^j$ for $i\ne j$, the random variables $\widetilde{V}_t(y^1),\ \ldots,\ \widetilde{V}_t(y^N)$ are jointly normally distributed. Then, given training data $(y^i,V_t(y^i))$, $i=1,\ \ldots,\ N$, for any $y\in E_Y$, the predicted value $\widetilde{V}_t(y)$, providing an estimate (approximation) of $V_t(y)$, is given by
\begin{align*}
  \widetilde{V}(y) = \left(k(y,y^1),\ldots,k(y,y^N)\right)[\mathbf{K}+\epsilon^2\mathbf{I}]^{-1}\left(V_t(y^1),\ldots,V_t(y^N)\right)^T,
\end{align*}
where $\epsilon$ is a tuning parameter, $\mathbf{I}$ is the $N\times N$ identity matrix and the matrix $\mathbf{K}$ is defined as $\mathbf{K}_{i,j}=k(y^i,y^j)$, $i,\ j=1,\ \ldots,\ N$. The function $k(\,\cdot\,,\,\cdot\,)$ is the kernel function for the GP model, and in this work we choose the kernel as the Matern-5/2 (cf. \cite{Rasmussen2006}). Fitting the GP surrogate $\widetilde{V}_t$ means to estimate the hyper-parameters inside $k(\,\cdot\,,\,\cdot\,)$ through training data $(y^i,V_t(y^i))$, $i=1,\ \ldots,\ N$. We note that since we do not have the closed form expression for $V_t(y)$, we  numerically evaluate $V_t(y)$ instead. The GP surrogates for $\hat{g}_t$ and $\hat{\varphi}_{t}$ are obtained in an analogous way. We take $\epsilon = 10^{-5}$.

Given the mesh points $\{y_t^i,\ i=1,\ \ldots,\ N,\ t=0,\ldots,T-1\}$, the overall algorithm proceeds as follows: \\
\textit{Part A:} Time backward recursion for $t=T-1,\ldots, 0$.
\begin{enumerate}
  \item Assume that $V_{t+1}(y^i_{t+1})$, $\hat{g}_{t+1}(y^i_{t+1})$ and $\hat{\varphi}_{t+1}(y^i_{t+1})$, $i=1,\ldots,N$, are numerically approximated as $\overline V_{t+1}(y^i_{t+1})$, $\overline{\hat{g}}_{t+1}(y^i_{t+1})$ and $\overline{\hat{\varphi}}_{t+1}(y^i_{t+1})$, $i=1,\ldots,N$, respectively. Also suppose that the corresponding GP surrogates $\widetilde{V}_{t+1}$, $\widetilde{\hat g}_{t+1}$, and $\widetilde{\hat \varphi}_{t+1}$ are fitted through training data $(y^i_{t+1},\overline{V}_{t+1}(y^i_{t+1}))$, $(y^i_{t+1},\overline{\hat{g}}_{t+1}(y^i_{t+1}))$, and $(y^i_{t+1},\overline{\hat{\varphi}}_{t+1}(y^i_{t+1}))$, $i=1,\ldots,N$, respectively.
  \item For time $t$, any $a\in A$, $\theta\in\tau(t,c)$ and each $y^i_t$, $i=1,\ \ldots, N$, use one-step Monte Carlo simulation to estimate the quantities
 \begin{align*}
	Q^{a,\theta}_{y_t^i,t}V_{t+1} &= \bE_{\theta}\left[V_{t+1}(\boldsymbol G(t,y_t^i,a,Z_{t+1}))\right],\\
	Q^{a,\theta}_{y_t^i,t}\hat{g}^2_{t+1} &= \bE_{\theta}\left[\hat{g}^2_{t+1}(\boldsymbol G(t,y_t^i,a,Z_{t+1}))\right],\\
	Q^{a,\theta}_{y_t^i,t} \hat{g}_{t+1} &= \bE_{\theta}\left[\hat{g}_{t+1}(\boldsymbol G(t,y_t^i,a,Z_{t+1}))\right].
\end{align*}
For that, if $Z^1_{t+1},\ \ldots,\ Z^M_{t+1}$ is a sample of $Z_{t+1}$ drawn from the normal distribution corresponding to parameter $\theta$, where $M>0$ is a positive integer, then estimate the above expectations as
\begin{align*}
	Q^{a,\theta}_{y_t^i,t}V_{t+1}&\approx \widetilde Q^{a,\theta}_{y_t^i,t}\widetilde V_{t+1}:= \frac{1}{M}\sum_{i=1}^M\widetilde{V}_{t+1}(\boldsymbol G(t,y_t^i,a,Z^i_{t+1})),\\
	Q^{a,\theta}_{y_t^i,t}\hat{g}^2_{t+1}&\approx \widetilde Q^{a,\theta}_{y_t^i,t}\widetilde{\hat{g}}^2_{t+1} :=\frac{1}{M}\sum_{i=1}^M\widetilde{\hat g}^2_{t+1}(\boldsymbol G(t,y_t^i,a,Z^i_{t+1})),\\
	Q^{a,\theta}_{y_t^i,t}\hat{g}_{t+1}&\approx \widetilde Q^{a,\theta}_{y_t^i,t}\widetilde{\hat{g}}_{t+1}:=\frac{1}{M}\sum_{i=1}^M\widetilde{\hat g}_{t+1}(\boldsymbol G(t,y_t^i,a,Z^i_{t+1})).
\end{align*}
Next, estimate the values of \eqref{eq:Bel4-MV}.
	\item For each $y^i_t$, $i=1,\ \ldots,\ N$, and any $a\in A$, build a uniform grid over the set $\tau(t,c)$, and search for a grid point, say $\hat{\theta}(y^i_t,a)$, that minimizes
$$
\widetilde Q^{a,\theta}_{y_t^i,t}\widetilde V_{t+1}
-\gamma \widetilde Q^{a, {\theta}}_{y_t^i,t}\widetilde {\hat g}^2_{t+1}
+ \gamma (\widetilde Q^{a,\theta}_{y_t^i,t}\widetilde {\hat g}_{t+1})^2.
$$

\item Compute
$$
\overline V_t(y_t^i)=\max_{a\in A} \left\{\widetilde Q^{a,\hat{\theta}(y_t^i,a)}_{y_t^i,t}\widetilde V_{t+1}
- \gamma \widetilde Q^{a, \hat{\theta}(y_t^i,a)}_{y_t^i,t}\widetilde{\hat g}^2_{t+1}
+ \gamma(\widetilde Q^{a,\hat{\theta}(y_t^i,a)}_{y_t^i,t}\widetilde {\hat g}_{t+1})^2\right\},
$$
and obtain a maximizer $\overline{\hat{\varphi}}_t(y_t^i)$, and corresponding $\overline{\hat{g}}_t(y_t^i)=\widetilde Q^{\hat{\varphi}_t(y_t^i),\hat{\theta}(y_t^i,\hat{\varphi}_t(y_t^i))}_{y_t^i,t}\widetilde {\hat g}_{t+1}$, $i=1,\ldots,N$.
\item Fit  GP regression models for $V_t(\,\cdot\,)$ and $\hat{g}_t(\,\cdot\,)$ using the results from Step~4 above. Fit a GP model for $\hat{a}_t(\,\cdot\,)$ as well; this is needed for obtaining values of the optimal strategies for out-of-sample paths in Part B of the algorithm.
	\item Goto 1: Start the next recursion for $t-1$.
\end{enumerate}
\textit{Part B:} Forward simulation to evaluate  the performance of the GP surrogate $\widetilde{\hat \varphi}_t(\cdot)$, $t=0, \ldots,T-1$, over the out-of-sample paths.
\begin{enumerate}
  \item Draw $K>0$ samples of i.i.d.  $Z_{1}^{*,i},\ldots,Z_{T}^{*,i}$, $i=1,\ldots,K$, from the normal distribution corresponding to the assumed true parameter $\theta^*$.
  \item All paths will start from the initial state $y_0$. The state along each path $i$ is updated according to $\boldsymbol G(t,y_t^i,\widetilde{\hat \varphi}_t^i(y_t^i),Z_{t+1}^{*,i})$, where $\widetilde{\hat \varphi}_t$ is the GP surrogate fitted in Part A.
  \item Obtain the terminal wealth $\hat W^{*,i}_T$, generated by $\widetilde{\hat \varphi}$ along the path corresponding to the sample of  $Z_{1}^{*,i},\ \ldots,\ Z_{T}^{*,i}$, $i=1,\ \ldots,\ K$, and compute
           \begin{align}\label{eq:ar}
       V^{\text{ar}}:=\underbrace{\frac{1}{K}\sum_{i=1}^{K}\widehat W_T^{\text{ar},i}}_{\text{sample mean of } \widehat W^{\text{ar}}_T}-\gamma\underbrace{\left(\frac{1}{K}\sum_{i=1}^{K}\left(\widehat W^{\text{ar},i}_T\right )^2-\left(\frac{1}{K}\sum_{i=1}^{K}\widehat W^{\text{ar},i}_T\right)^2\right)}_{\text{sample variance of }\widehat W^{\text{ar}}_T}
      \end{align}
      as an estimate of the performance of the optimal adaptive robust sub-game perfect strategy $\hat \varphi$.
\end{enumerate}

We compare \eqref{eq:ar} to the performance of strategy generated by the strong robust sub-game perfect methodology (cf. Remark~\ref{remark:strong}) on $K=2000$ out-of-sample paths, where the latter performance is measured in terms of the mean-variance utility, say $V^{\text{sr}}$, which is computed in analogy to \eqref{eq:ar}.

\subsection{Numerical results}
In this section we apply the machine learning algorithm described above by taking a specific set of parameters. For both, Case~I and Case~II we take: $T=52$ with one period of time corresponding to one week; the annualized return on banking account being equal to 0.02 or equivalently $r=0.0003846$; the initial wealth $W_0=100$; in  Part A of our algorithm the number of Monte Carlo simulations is $N=200$, and $M=100$; {the number of forward simulations in Part B is taken $K=2000$}; the confidence level $\alpha=0.1$. For both cases, we analyze the performance of the control methods for $\gamma=0.2$ and $\gamma =0.9$.
The assumed true parameter values, the initial guesses for the parameters, the bounds for the uncertainty set $\boldsymbol{\Theta}$, as well as the numerical results, are presented for each case separately.

In what follows we will abbreviate adaptive robust as AR, and strong robust as SR.

\bigskip\noindent
\textit{\textbf{Case I.}} Recall that in this case only the return $\mu$ is assumed to be unknown. The assumed true parameter value is denoted by $\mu^*$, the initial guess is denoted by $\mu_0$, the uncertainty set is  the interval $\boldsymbol{\Theta}=[\underline{\mu}, \overline{\mu}]$. The relevant parameters are summarized in Table~\ref{table:mv_params}.

\begin{table}[!ht]
	\centering
	$$\begin{array}{c} %{|l|r|l|r|}
	\hline \\
	T=52, \ r = 0.0003846, \ \gamma = 0.2, \ W_0 = 100\\ \alpha = 0.1, \ N = 200, \
	
	M=100, \ K=2000 \\ \mu^*=0.00192, \  \underline{\mu}=0.000192, \ \overline{\mu}=0.0096, \ \sigma= 0.0166
	\\
\mu_0 = 0.002308 \textrm{ \ (optimistic), or \ } \mu_0=0.001538 \textrm{\ (pessimistic)} \\ \\
	\hline
	\end{array}$$
	\caption{Model parameters for mean-variance portfolio selection problem; Case I. }
	\label{table:mv_params}
\end{table}

\begin{figure}[h!]
	\centering
	\includegraphics[width=0.7\textwidth]{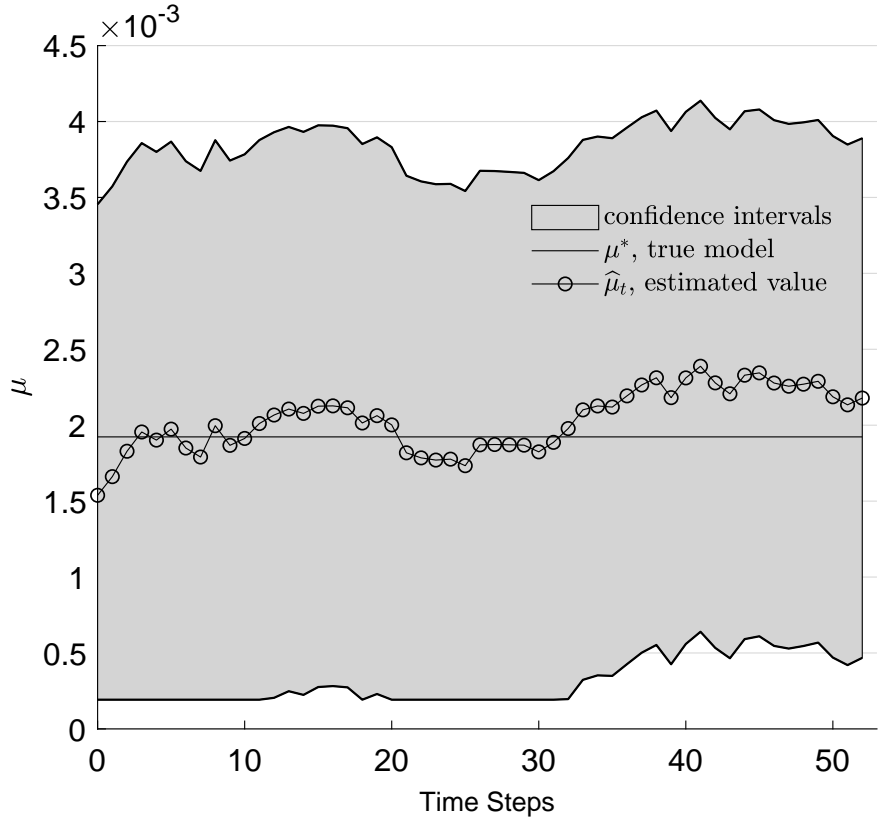}
	\caption{Evolution of the confidence intervals $\tau(t,c)$ at the confidence level $\alpha=10\%$; Case I, pessimistic.}
\label{fig:interval}
\end{figure}

We start by presenting the evolution of the confidence intervals $\tau(t,c)$ for the unknown parameter $\mu$; see Figure~\ref{fig:interval}, which represents the pessimistic initial guess (i.e. $\mu_0=  0.001538<\mu^\ast= 0.00192$). We recall that the SR methodology searches at each time for the worst-case model in $\boldsymbol{\Theta}$, while the AR searches over the confidence region $\tau(t,c)$, and then approximates  the corresponding optimal strategies.

In Figure~\ref{fig:histogramCase1} we display the histogram of out-of-sample terminal wealth $W_T$ that corresponds to the two subcases (optimistic and pessimistic) and two stochastic control approaches (AR and SR). The summary statistics are presented in Table~\ref{table:comparisonsCase1}.

\begin{figure}[h!]
   \centering
%     \includegraphics[width=0.49\textwidth]{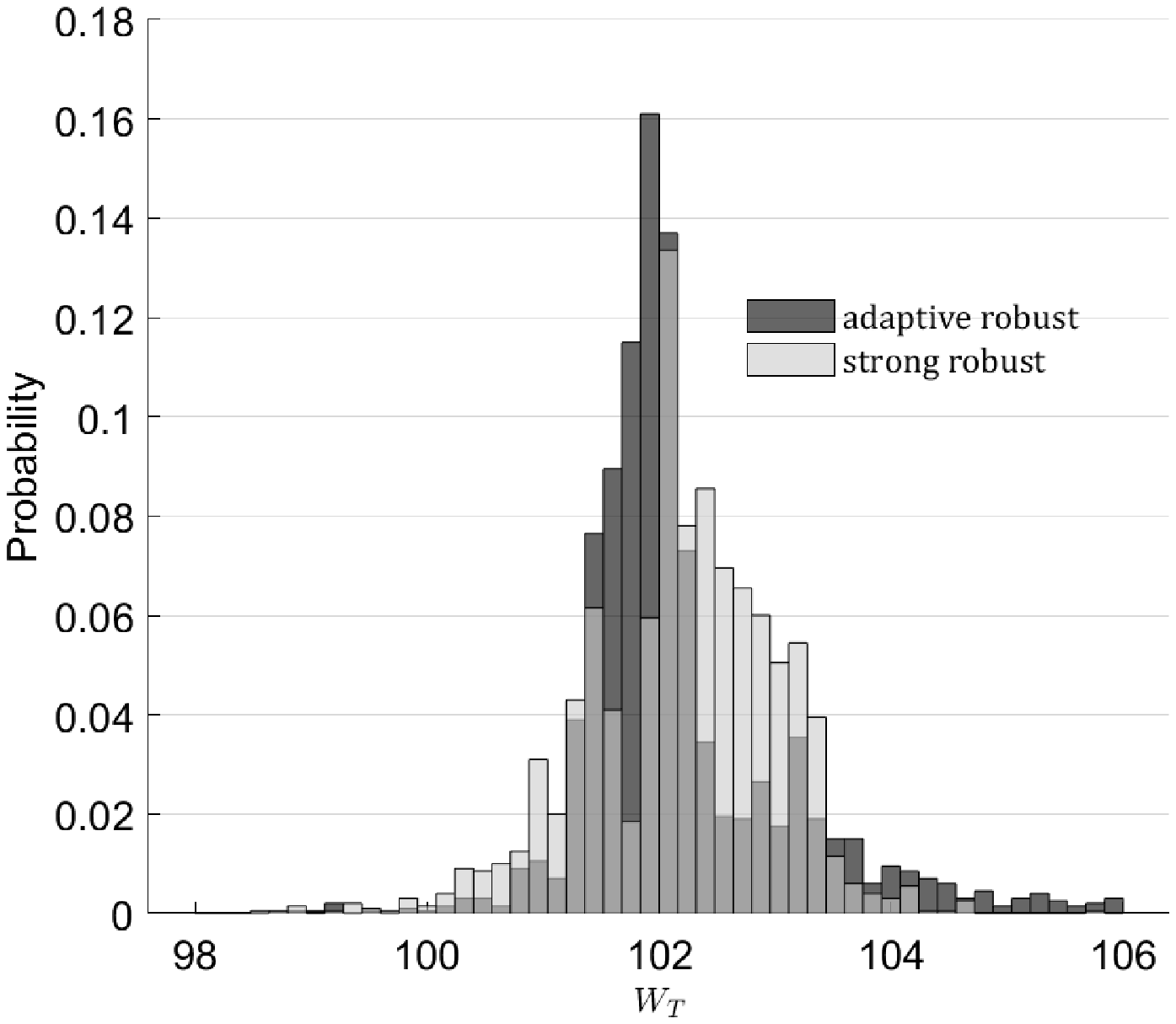}
%     \includegraphics[width=0.49\textwidth]{hist_cl_mu.jpg}
     %\caption{Histogram of $W_T$ generated from adaptive robust (blue), strong robust (green) and classical mean-variance approach (red), in the unknown $\mu$ and optimistic case. \label{fig:histogram3}}
% \end{figure}
%\begin{figure}[h!]
%   \centering
%     \includegraphics[width=0.49\textwidth]{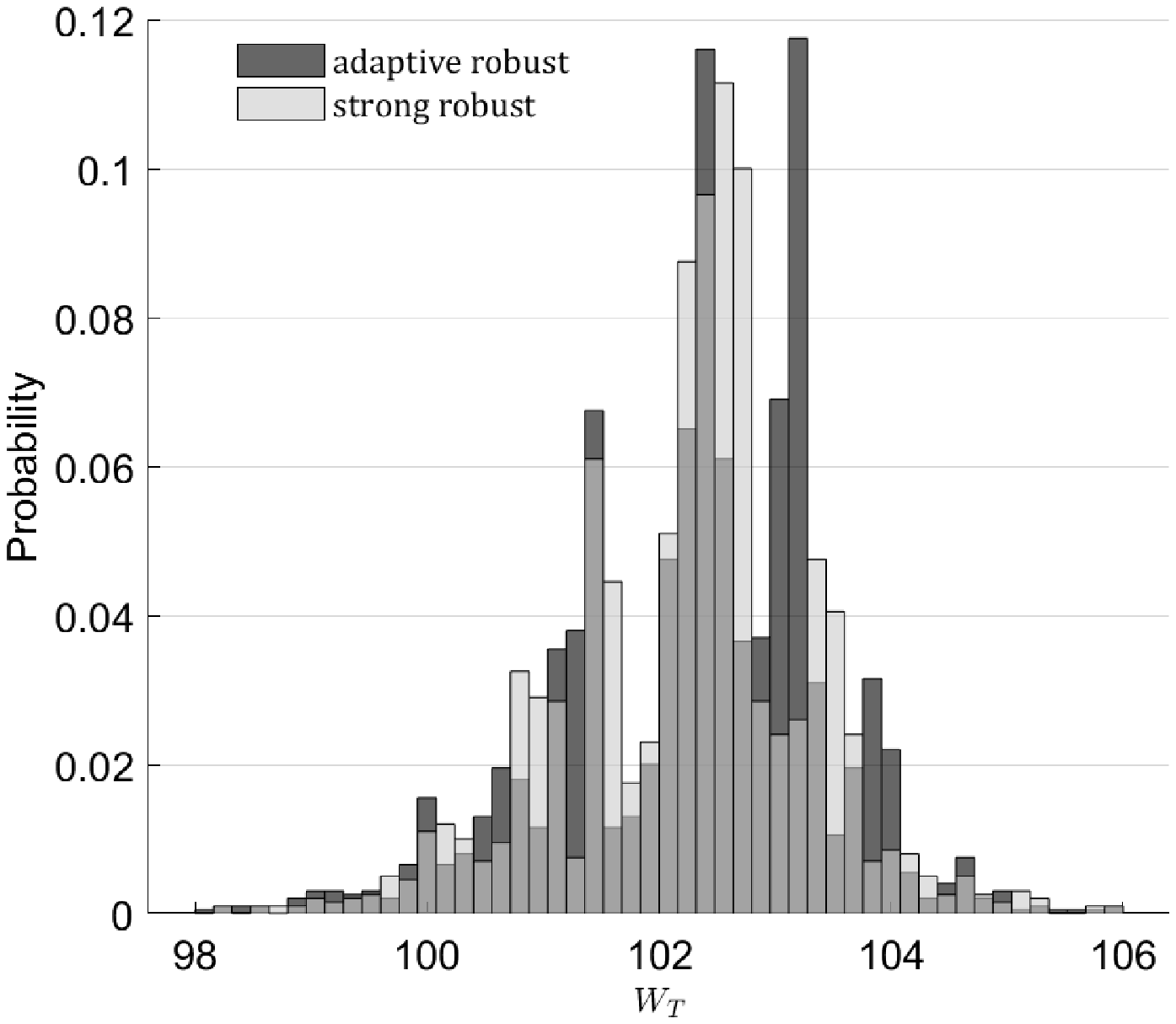}
%     \includegraphics[width=0.49\textwidth]{hist_cl_mu1.jpg}

     \includegraphics[width=0.49\textwidth]{Hist1D_Optimistic.eps}
     \includegraphics[width=0.49\textwidth]{Hist1D_Pessimist.eps}

     \includegraphics[width=0.49\textwidth]{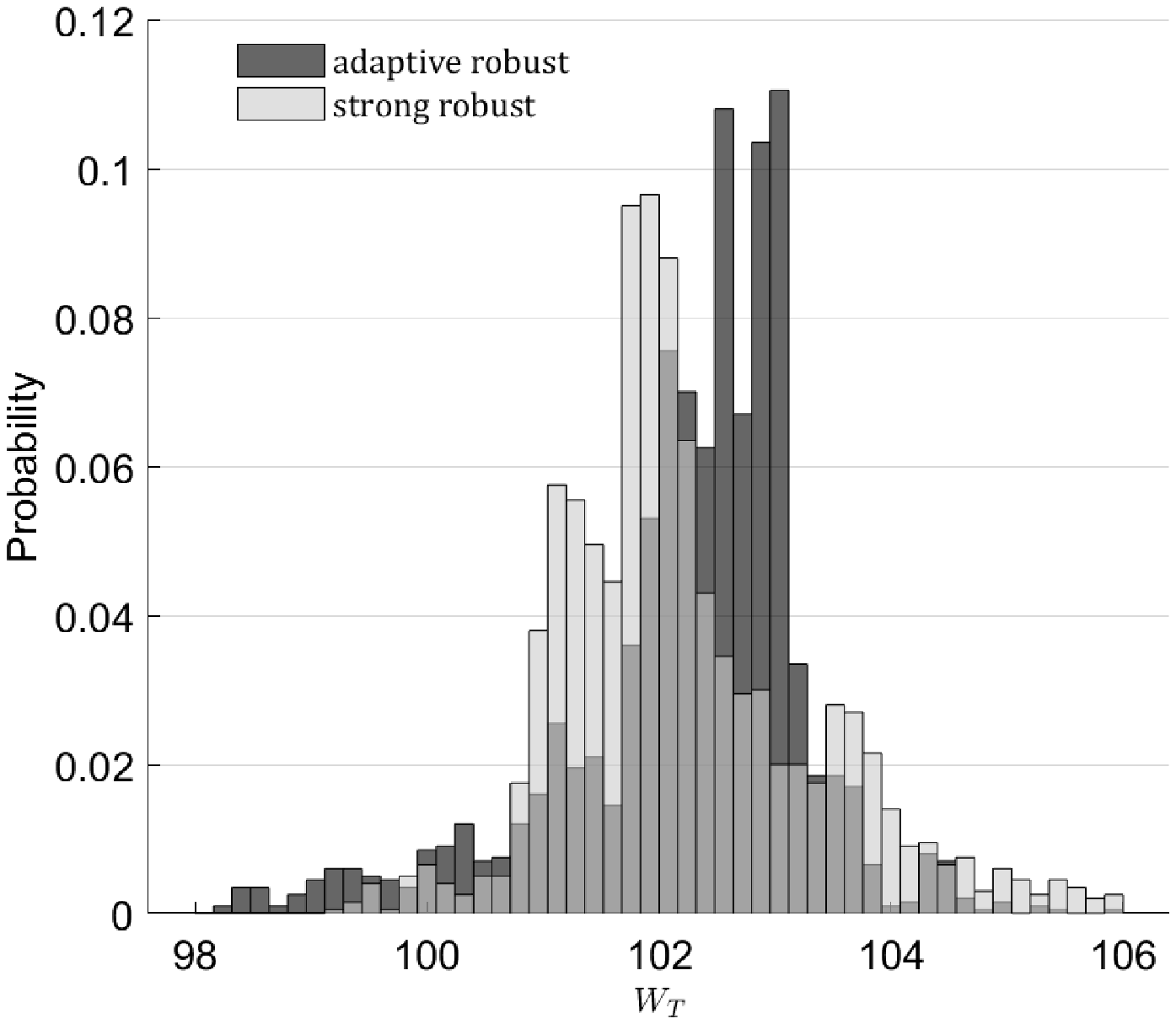}
     \includegraphics[width=0.49\textwidth]{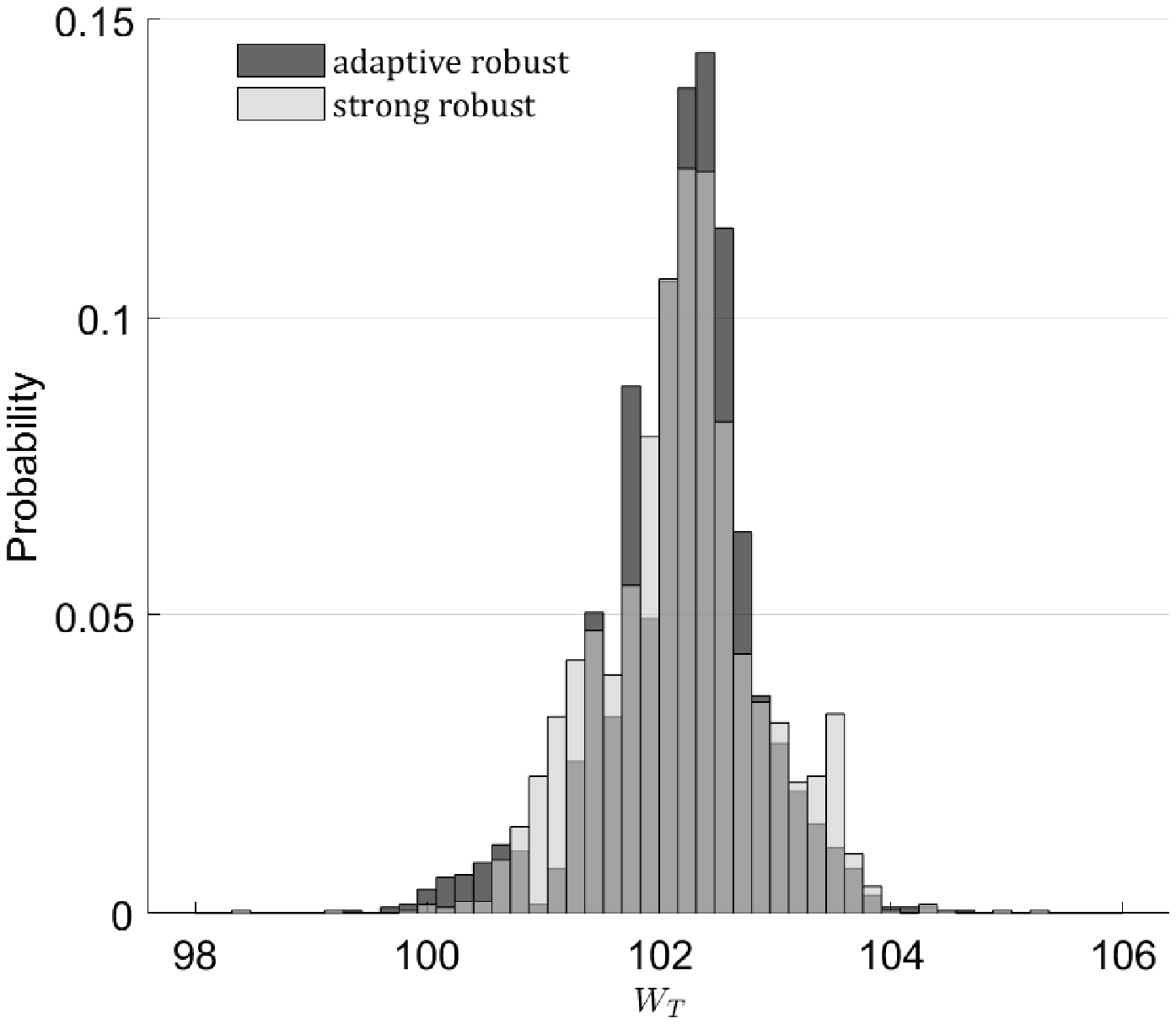}

%     \caption{Histogram of the out-of-sample terminal wealth $W_T$ for Case~I, unknown $\mu$ and known $\sigma$. Top row corresponds to the optimistic case, and the bottom raw to the pessimistic case.  Adaptive robust (blue), strong robust (green) and adaptive approach  (red). }

     \caption{Histogram of the out-of-sample terminal wealth $W_T$ for Case~I, unknown $\mu$ and know $\sigma$. Risk averse coefficient $\gamma=0.2$ -- top row, and $\gamma=0.9$ -- bottom row; optimistic case -- left panel, and  pessimistic case -- right panel. }

     \label{fig:histogramCase1}
\end{figure}

\begin{table}[h!]
\centering
\renewcommand{\arraystretch}{1.3}
\begin{tabular}{c|cccc||cccc}
%\cline{2-10} \cline{2-10}
\hline
 \multicolumn{1}{c}{ }& \multicolumn{4}{||c||}{Optimistic}  & \multicolumn{4}{c}{Pessimistic}  \\
  %\cline{2-10}
  \multicolumn{1}{c}{ }& \multicolumn{2}{||c}{$\gamma = 0.2$} & \multicolumn{2}{c||}{$\gamma = 0.9$} & \multicolumn{2}{c}{$\gamma = 0.2$} & \multicolumn{2}{c}{$\gamma  = 0.9$}\\
  \multicolumn{1}{c}{ }& \multicolumn{1}{||c}{AR} & SR & AR & SR & AR & SR & AR & SR \\
  \hline
   mean($W_T$) & \multicolumn{1}{||c}{102.204} & 102.203 & 102.263 & 102.267 & 102.338 & 102.262 & 102.189 & 102.190\\
 var($W_T$) & \multicolumn{1}{||c}{0.887} & 0.673 & 1.151 & 1.473 & 1.328 & 1.122 & 0.430 & 0.523 \\
 $q_{0.90}(W_T)$ & \multicolumn{1}{||c}{103.399} & 103.203 & 103.185 & 103.840 & 103.673 & 103.481 &  102.912 & 103.107 \\
 $\text{max}(W_T)$ & \multicolumn{1}{||c}{107.664} & 105.747  & 106.504 & 108.971 & 107.253 & 107.008 & 104.672 & 107.699  \\
 $\text{min}(W_T)$ & \multicolumn{1}{||c}{98.664} & 98.564 & 97.534 & 97.295 & 97.537 & 97.187 & 99.3915 & 98.352\\
 $V$ & \multicolumn{1}{||c}{102.027} & 102.068 & 101.227 & 100.941 & 102.073 & 102.038  & 101.802 & 101.719\\
 \hline
\end{tabular}
\bigskip
\caption{Mean, variance, 90\%-quantile, maximum, and minimum of the out-of-sample terminal wealth and mean-variance utility $V$ for AR and SR for Case~I.}
\label{table:comparisonsCase1}
\end{table}

We observe that the performance of the AR and SR methods is comparable in Case 1. This indicates that in this case the uncertainty reduction is not very effective. We attribute this to the fact that the uncertainty regarding the mean return requires a longer time horizon. Nevertheless, the closer inspection of the results shows that in some situations (e.g. optimistic case and $\gamma = 0.9$) the performance of AR is better than performance of SR.

\bigskip
\noindent
\textit{\textbf{Case II.}}  We take the same set of parameters as in Case~I (see Table~\ref{table:mv_params}), except that instead of the known and fixed $\sigma$, we now take
$$
\sigma^*=0.0416, \ \underline{\sigma}=0.0069, \ \overline{\sigma}=0.1109, \ \sigma_0 = 0.0347 \ (\text{optimistic}), \sigma_0 = 0.0485 \ (\text{pessimistic}).
$$
With both $\mu$ and $\sigma$ unknown, the model uncertainty set is the two dimensional rectangle $\boldsymbol{\Theta}=[\underline{\mu},\overline{\mu}]\times[\underline{\sigma}^2,\overline{\sigma}^2]$. The evolution of the projected confidence regions, which are derived  from confidence ellipsoids in this case, along with the true parameter values $(\mu^*,(\sigma^*)^2)$ and the MLEs $(\widehat\mu,\widehat{\sigma}^2)$ are displayed in Figure~\ref{fig:ellipse}.
\begin{figure}[h!]
	\centering
	\includegraphics[width=0.7\textwidth]{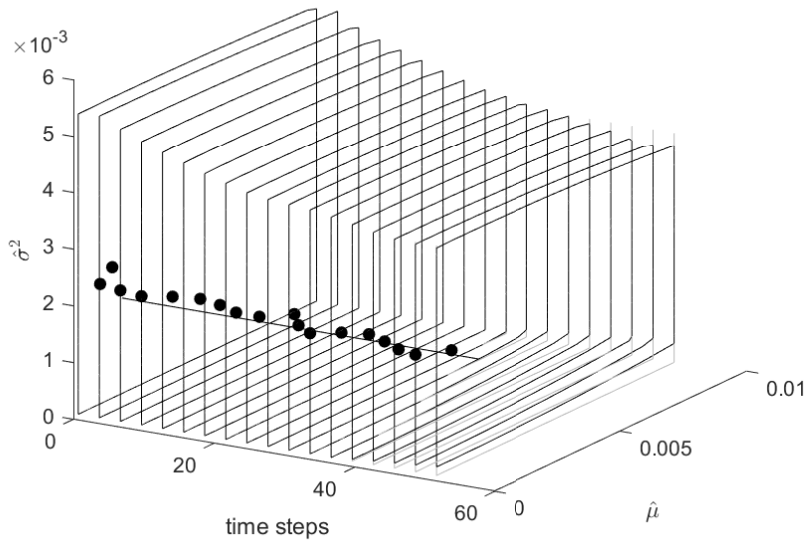}
	\caption{Evolution of $\tau(t,c)$ at confidence level $\alpha=10\%$ (ellipsoids), the true parameters value $(\mu^*,(\sigma^*)^2)$ (the solid straight line), and the MLE $(\widehat{\mu}, \widehat{\sigma}^2)$ (dotted line),  for Case II, pessimistic.  }
	\label{fig:ellipse}
\end{figure}

\begin{figure}[h!]
   \centering
     \includegraphics[width=0.49\textwidth]{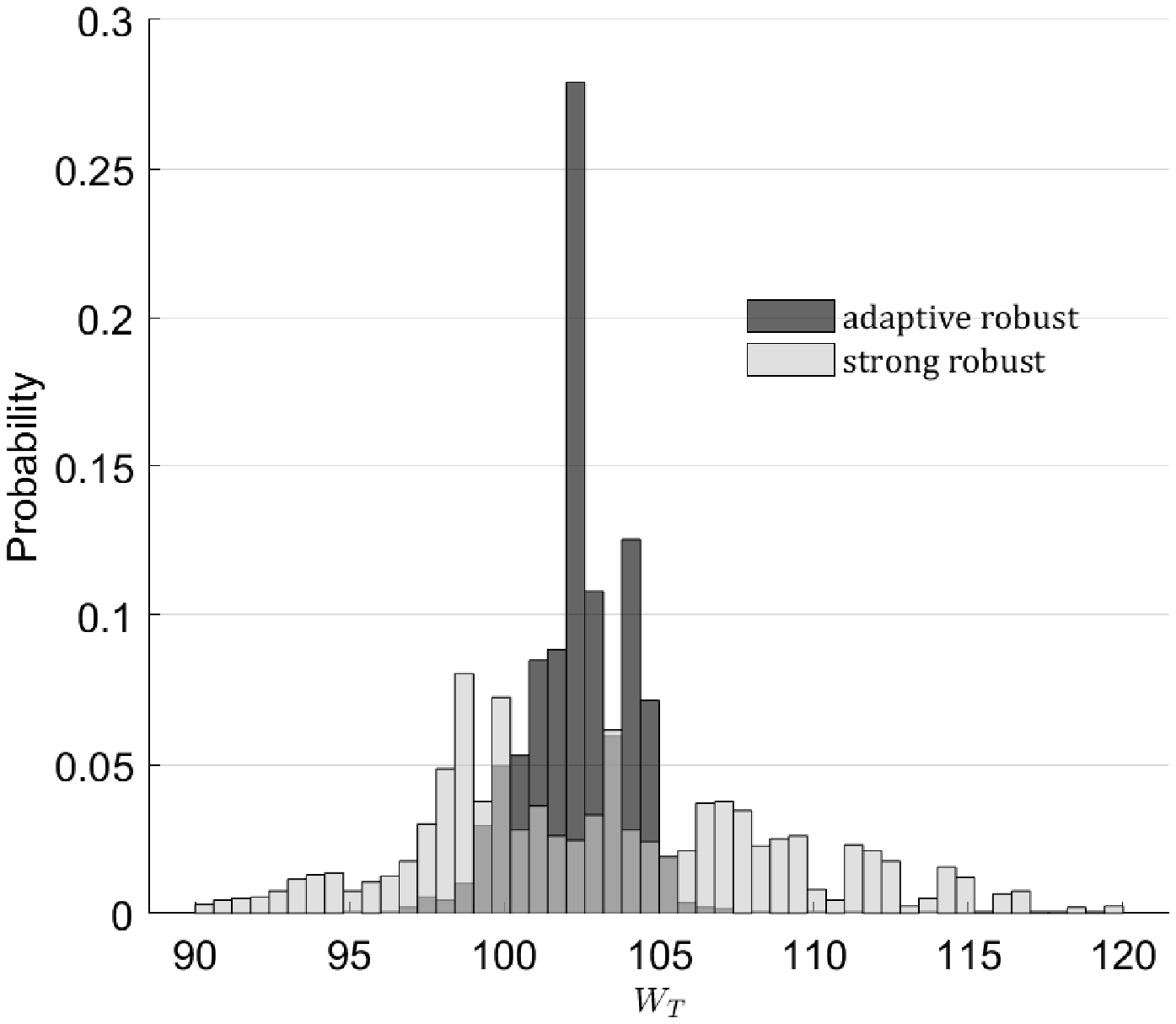}
     \includegraphics[width=0.49\textwidth]{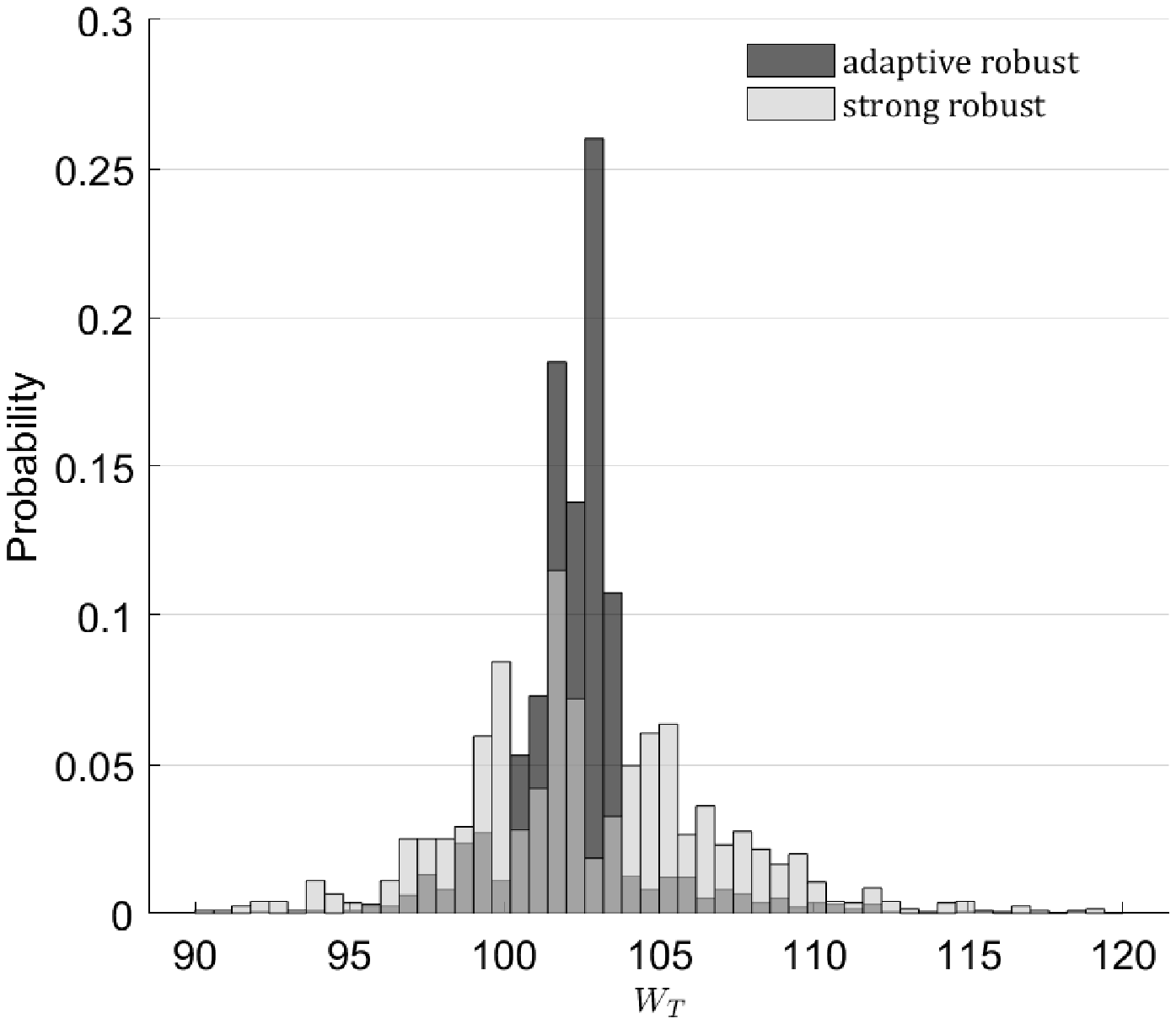}

     \includegraphics[width=0.49\textwidth]{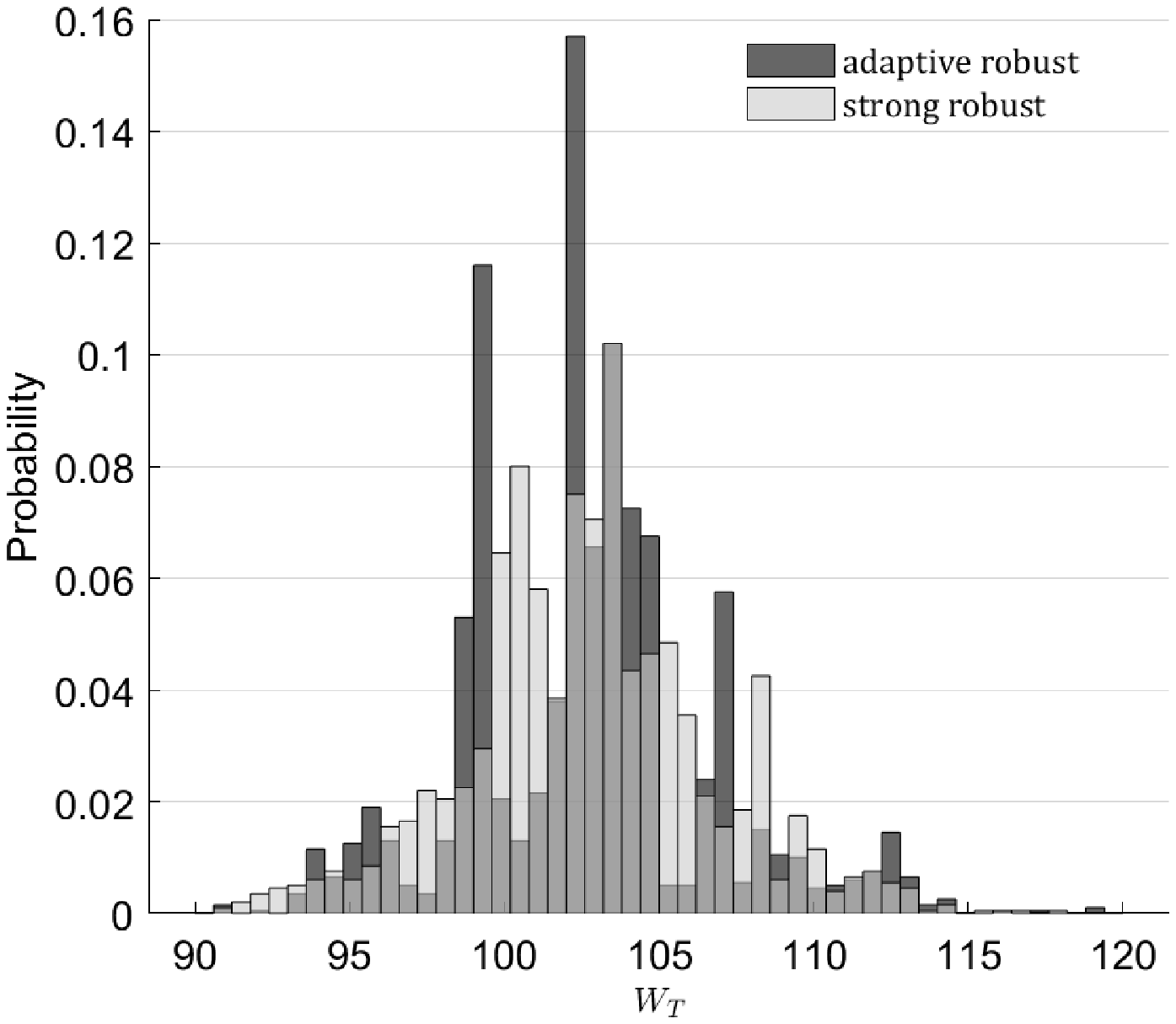}
     \includegraphics[width=0.49\textwidth]{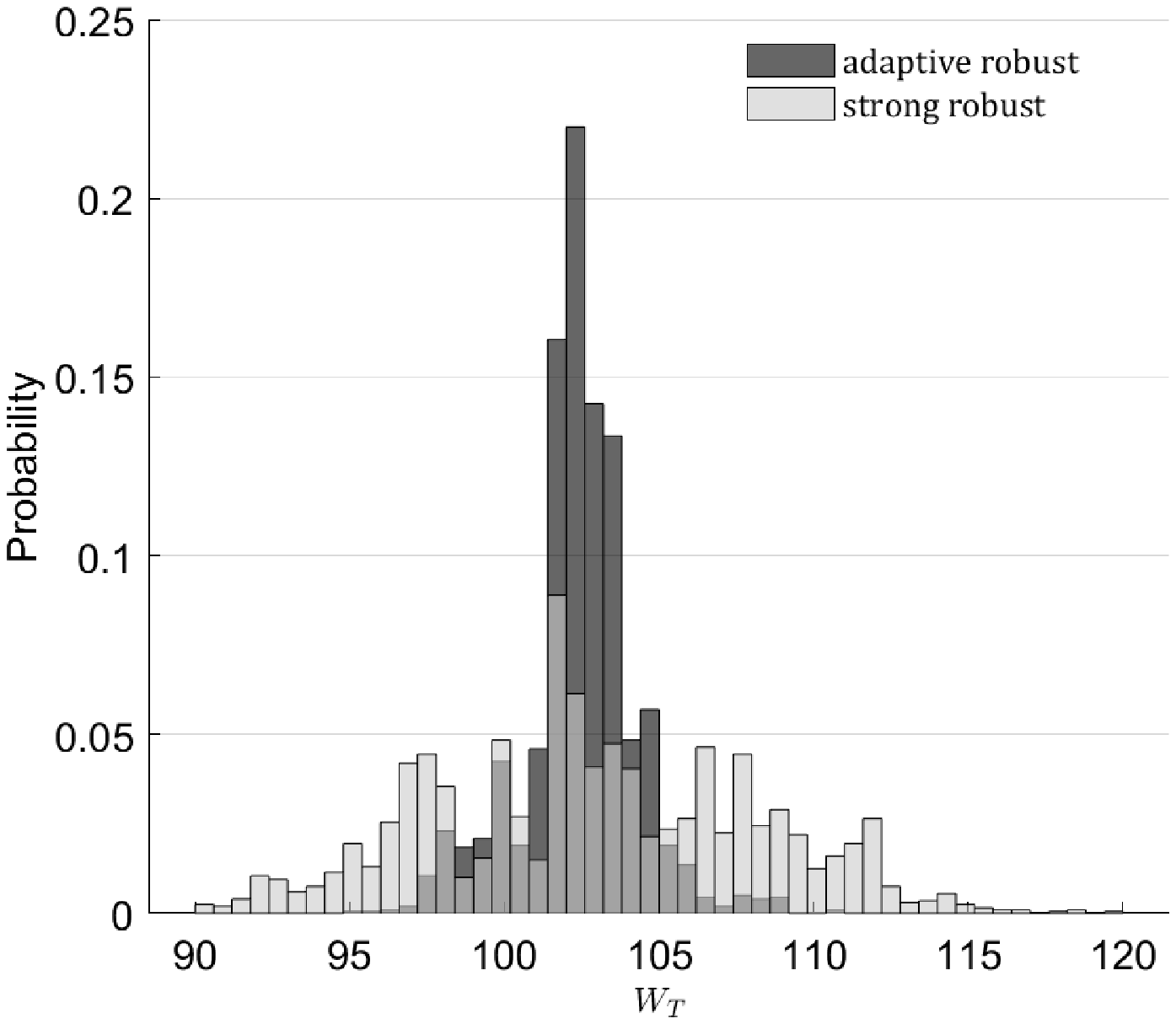}
          \caption{Histogram of the out-of-sample terminal wealth $W_T$ for Case~II, unknown $\mu$ and $\sigma$. Risk averse coefficient $\gamma=0.2$ -- top row, and $\gamma=0.9$ -- bottom row; optimistic case -- left panel, and pessimistic case -- right panel. }
     \label{fig:histogramCase2}
\end{figure}

\begin{table}[h!]
\centering
\renewcommand{\arraystretch}{1.3}
\begin{tabular}{c|cccc||cccc}
%\cline{2-10} \cline{2-10}
 \multicolumn{1}{c}{ }& \multicolumn{4}{||c||}{Optimistic}  & \multicolumn{4}{c}{Pessimistic}  \\
 \multicolumn{1}{c}{ }& \multicolumn{2}{||c}{$\gamma = 0.2$} & \multicolumn{2}{c||}{$\gamma = 0.9$} & \multicolumn{2}{c}{$\gamma = 0.2$} & \multicolumn{2}{c}{$\gamma = 0.9$}\\
  %\cline{2-10}
  \multicolumn{1}{c}{ }& \multicolumn{1}{||c}{AR} & SR & AR & SR & AR & SR & AR & SR \\
  \hline
   mean($W_T$) & \multicolumn{1}{||c}{102.371} & 103.132 & 102.693 & 102.703 & 102.339 & 102.746 & 102.396 & 102.869\\
 var($W_T$) & \multicolumn{1}{||c}{2.653} & 34.654 & 15.983 & 16.149 & 4.653 & 18.911 & 3.349 & 29.698\\
 $q_{0.90}(W_T)$ & \multicolumn{1}{||c}{104.396} & 111.554 & 107.308 & 108.186 & 103.521 & 108.125 & 104.542 & 110.036\\
 $\text{max}(W_T)$ & \multicolumn{1}{||c}{113.741} & 126.470 & 123.094 & 121.139 & 118.592 & 121.682 & 110.559 & 128.299 \\
 $\text{min}(W_T)$ & \multicolumn{1}{||c}{95.027} & 81.330 & 90.838 & 87.413 & 91.981 & 80.703 & 95.238 & 82.473\\
 $V$ & \multicolumn{1}{||c}{101.840} & 96.201 & 88.309 & 88.169 & 101.408 & 98.964 & 99.382 & 76.142 \\
 \hline
\end{tabular}
\bigskip
\caption{Mean, variance, 90\%-quantile, maximum, and minimum of the out-of-sample terminal wealth and mean-variance utility $V$ for the AR and SR methods;  Case~II.}
\label{table:comparisonsCase2}
\end{table}

Similar to Case~I, we present the histograms of the out-of-sample terminal wealth $W_T$, Figure~\ref{fig:histogramCase2}, for AR and SR. The corresponding summary statistics are listed in Table~\ref{table:comparisonsCase2}.

The results clearly  indicate that overall the performance of AR is better than the performance of SR. Across the parameterizations, the value of the optimization criterion (i.e. $V_T$) is larger for AR than for SR. This is because AR produces much smaller variance of $W_T$ than SR, while both methods produce comparable values of the mean of $W_T$. This, together with the values of other statistics indicates that SR is less risky than AR. We attribute this to better handling of model uncertainty by AR than it is done by SR.

Finally, we want to mention that while we performed a similar analysis for various parameters sets and usually the obtained results are similar to the above, some cases may require a deeper analysis and understanding. For example, when the true parameter is close to the worst case one may expect that the strong robust strategy would outperform the adaptive robust strategy, which is not always the case. Also, in some examples, it may happen that the adaptive robust strategy might perform better than the strategy generated by knowing the true parameter. Understanding such phenomena will be part of the future work.

\section{Concluding remarks and future research}\label{sec:conlcusion}

In this paper we have provided a methodology for dealing with a class of time-inconsistent Markovian decision problems in discrete time subject to model uncertainty, which is also known as Knigthian uncertainty. A version of the adaptive robust approach, that was originated in \cite{BCCCJ2019}, combined with sub-game perfect approach to time-inconsistent stochastic control problems as in \cite{BjoerkMurgoci2014}, have been successfully used here.
For simplicity we were assuming that the set of available actions is finite. This assumption, although quite fine from the numerical perspective, will be generalized to an appropriate compactness assumption in a follow up study. We only proved the existence of a pair of sub-game perfect strategies in Section \ref{sec:existence}. The study of the uniqueness of such strategies is deferred to a follow-up paper. Finally, we want to mention that while in this paper we only studied time-inconsistent Markovian decision problems with terminal cost, the generalizations to the case of terminal plus running cost will be addressed in future works.

\section*{Acknowledgements}
Tomasz R. Bielecki and Igor Cialenco acknowledge support from the National Science Foundation grant DMS-1907568.

\bibliographystyle{alpha}
\bibliography{MathFinanceMaster-08-25-2020}

\end{document}